\documentclass{article}
\usepackage[utf8]{inputenc}
\usepackage{authblk}
\usepackage{setspace}
\usepackage[margin=1in]{geometry}
\usepackage{graphicx}
\graphicspath{ {./figures/} }
\usepackage{subcaption}
\usepackage{amsmath}
\usepackage{lineno}
\usepackage{comment}
\usepackage{amsthm}

\newtheorem{thm}{Theorem}[section]

\newtheorem{lem}[thm]{Lemma}

\usepackage{amsfonts} 
\usepackage{mathtools} 

\usepackage{url}
\usepackage{hyperref}
\setcounter{tocdepth}{2}

\usepackage{indentfirst}
\usepackage{xcolor}
\usepackage{float}

\hypersetup{
    colorlinks,
    linkcolor={red!50!black},
    citecolor={blue!50!black},
    urlcolor={blue!80!black}
}
\usepackage[natbib=true,backend=biber,sorting=none,style=ieee, citestyle=numeric-comp]{biblatex}

\addbibresource{reference.bib}

\title{Role of Intra-specific Competition and Additional Food on Prey-Predator Systems exhibiting Holling Type-IV Functional Response}

\author[1]{D Bhanu Prakash}
\author[2]{D K K Vamsi}

\affil[1, 2]{ \ Department of Mathematics and Computer Science, Sri Sathya Sai Institute of Higher Learning, India.}
\affil[2]{Center for Excellence in Mathematical Biology, Sri Sathya Sai Institute of Higher Learning, India.}
\affil[1]{Corresponding Author. Email: dbhanuprakash@sssihl.edu.in}

\date{}

\doublespacing

\begin{document}

\maketitle

\begin{abstract} {{
\noindent In recent years, the study on the impact of competition on additional food provided prey-predator systems have gained significant attention from researchers in the field of mathematical biology. In this study, we consider an additional food provided prey-predator model exhibiting Holling type-IV functional response and the intra-specific competition among predators. We prove the existence and uniqueness of global positive solutions for the proposed model. We study the existence and stability of equilibrium points and further explore the codimension-$1$ and $2$ bifurcations with respect to the additional food and competition. We further study the global dynamics of the system and discuss the consequences of providing additional food. Later, we do the time-optimal control studies with respect to the quality and quantity of additional food as control variables by transforming the independent variable in the control system. Making use of the Pontraygin maximum principle, we characterize the optimal quality of additional food and optimal quantity of additional food. We show that the findings of these dynamics and control studies have the potential to be applied to a variety of problems in pest management.
}}
\end{abstract}

{ \bf {keywords:} } Prey-Predator System; Holling type-IV response; Additional Food; Intra-specific Competition; Bifurcation; Time-optimal control; Pest management;

{ \bf {MSC 2020 codes:} } 37A50; 60H10; 60J65; 60J70;


\section{Introduction} \label{Intro}

\indent Prey-predator models are mathematical representations used to study the intricate dynamics between two interacting species: the prey and its predator. These models aim to understand how changes in the population sizes of both species influence each other over time. By examining the fluctuations and stability of both populations, these models contribute to our understanding of ecological balance, the impact of external factors on species coexistence, and the potential consequences of perturbations in natural communities. Prey-predator models play a critical role in guiding conservation efforts, studying invasive species' effects, and comprehending the intricate web of life in ecosystems worldwide. 

One of the fundamental components in prey-predator models is the functional response which describes how the predator's consumption rate changes in response to variations in prey density \cite{kot2001elements}. The very first few proposed functional responses are the Holling functional responses, proposed by Canadian ecologist C.S. Holling in the 1950s \cite{metz2014dynamics}. The Holling type-IV functional response exhibits a saturation effect, meaning that the rate of prey consumption by predators increases at a decreasing rate as prey density increases. This saturation effect aligns with empirical observations that predators have limited capacity and cannot consume an unlimited number of prey items. 

In this study, we incorporate two of the major components that can alter the prey-predator dynamics: Additional Food and Competition. The concept of providing additional food to predators reflects a more realistic scenario where predators may have access to alternative food sources, such as other prey species or external resources. Understanding the role of additional food in prey-predator models is crucial for comprehending the complexity of ecological systems and the various factors that influence species coexistence and ecosystem stability. On the other hand, Competition in prey-predator systems arises when multiple predators target the same prey species or when prey species compete for limited resources. This competitive dynamic can significantly influence population stability and biodiversity. Intra-specific competition is a type of competition where the competition among predators is due to prey scarcity and this does not involve directly in the functional response unlike the case of mutual interference. Understanding these interactions is crucial in ecological modeling to predict population dynamics and inform conservation strategies.

Authors in \cite{V4DEDS,v2,V4Acta} studied the additional food provided prey-predator systems involving Holling type-III and Holling type-IV functional responses. The time-optimal control problems for additional food provided prey-predator systems involving Holling type-III and Holling type-IV functional responses are studied in \cite{ananth2021influence,ananth2022achieving,ananthcmb}. Recently, the stochastic time-optimal control problems for additional food provided prey-predator systems involving Holling type-III and Holling type-IV functional responses are studied in \cite{prakash2023stochastic,prakash9stochastic}. However, to the best of our knowledge, no work is available on prey-predator models incorporating both intra-specific competition and additional food to predators in the context of Holling type-IV functional response. In this work, we derive the prey-predator model and perform the dynamics and time-optimal control studies on the proposed system. 

The article is structured as follows: Section \ref{sec:4iscdmodel} introduces the prey-predator model with intra-specific competition and provision of additional food among predators. Section \ref{sec:4iscdposi} proves the positivity and boundedness of the solutions of the proposed system. Section \ref{sec:4iscdequil} investigates the conditions for the existence of various equilibria. The local stability of these equilibria is presented in section \ref{sec:4iscdstab}. In section \ref{sec:4iscdbifur}, we present the various possible local bifurcations exhibited by the proposed model both analytically and numerically. Section \ref{sec:4iscdglobaldynamics} studies the global dynamics of the proposed system in the parameter space of quality and quantity of additional food. Section \ref{sec:4iscdconseq} provides a detailed analysis of the consequences of providing additional food. Section \ref{sec:4iscdtimecontrol} presents the study on time-optimal control problems with quality or quantity of additional food as control parameters. Finally, we present the discussions and conclusions in section \ref{sec:disc}.

\section{Model Formulation} \label{sec:4iscdmodel}

Let $N$ and $P$ denote the biomass of prey and predator population densities respectively. In the absence of predator, the prey growth is modelled using logistic equation. Further, we assume that the prey species exhibit Holling type-IV functional response towards predators. Incorporating these assumptions, the prey-predator dynamics with Holling type-IV functional response can be described as:

\begin{equation} \label{4iscdnoa}
	\begin{split}
		\frac{\mathrm{d} N}{\mathrm{d} T} & = r N \left( 1-\frac{N}{K} \right)- \frac{c N P}{a(b N^2 + 1) + N}, \\
		\frac{\mathrm{d} P}{\mathrm{d} T} & = \frac{\delta_1 N P}{a(b N^2 + 1) + N} - m_1 P - d P^2.
	\end{split}
\end{equation}

This model is a generalized version of Bazykin model because in the absence of group defence of prey ($b$), this model is same as Bazykin model. We also assume that the predators are supplemented with an additional food of biomass A, which is uniformly distributed in the habitat. Accordingly, the system (\ref{4iscdnoa}) gets transformed to the following system.

\begin{equation} \label{4iscda}
	\begin{split}
		\frac{\mathrm{d} N}{\mathrm{d} T} & = r N \left( 1-\frac{N}{K} \right)- \frac{c N P}{(A \eta \alpha + a)(b N^2 + 1) + N}, \\
		\frac{\mathrm{d} P}{\mathrm{d} T} & = \delta_1 \left( \frac{N + \eta A (bN^2 + 1)}{(A \eta \alpha + a)(b N^2 + 1) + N} \right) P - m_1 P - d P^2.
	\end{split}
\end{equation}

Here the term $\eta$ represents the ratio between the search rate of the predator for additional food and prey respectively. The term $- d P^2(t)$ accounts for the intra-specific competition among the predators in order to avoid their unbounded growth in the absence of target prey \cite{V4Acta, V4DEDS}. Here the term $\alpha$ denotes the ratio between the maximum growth rates of the predator when it consumes the prey and additional food respectively. This term can be seen to be an equivalent of {\textit {\bf{quality}} } of additional food. For a complete analysis of functional responses of models (\ref{4iscdnoa}) and (\ref{4iscda}), the reader is advised to refer \cite{V4Acta}.

The biological descriptions of the various parameters involved in the systems  (\ref{4iscdnoa}) and  (\ref{4iscda}) are described in \autoref{4iscdparam_tab}.

\begin{table}[bht!]
	\centering
	\begin{tabular}{ccc}
		\hline
		Parameter & Definition & Dimension \\  
		\hline
		T & Time & time\\ 
		N & Prey density & biomass \\
		P & Predator density & biomass \\
		A & Additional food & biomass \\
		r & Prey intrinsic growth rate & time$^{-1}$ \\
		K & Prey carrying capacity & biomass \\
		c & Maximum rate of predation & time$^{-1}$ \\
		$\delta$ & Maximum growth rate of predator & time$^{-1}$ \\
		m & Predator mortality rate & time$^{-1}$ \\
		d & Death rate of predators & biomass$^{-1}$ time$^{-1}$ \\ 
		& due to intra-specific competition & \\
		$\alpha$ & Quality of additional food for predators & Dimensionless \\
		b & Group defence in prey & biomass$^{-2}$ \\
		\hline
	\end{tabular}
	\caption{Description of variables and parameters present in the systems (\ref{4iscdnoa}) and (\ref{4iscda}).}
	\label{4iscdparam_tab}
\end{table}

In order to reduce the complexity of the model, we non-dimensionalize the systems (\ref{4iscdnoa}) and (\ref{4iscda}) using the following non-dimensional parameters. 
$$N=ax, \  P=\frac{ary}{c}, \ t = r T,\ \gamma = \frac{K}{a}, \ \xi = \frac{\eta A}{a}, \ \omega = b a^2, \  \epsilon = \frac{c}{a d},\ \delta = \frac{\delta_1 a r}{c},\ m = \frac{m_1}{r}.$$
Accordingly, system (\ref{4iscdnoa}) gets reduced to the following system.
\begin{equation} \label{4iscdi}
	\begin{split}
		\frac{\mathrm{d} x}{\mathrm{d} t} & = x \left(1-\frac{x}{\gamma} \right)- \frac{xy}{\omega x^2 + 1 + x}, \\
		\frac{\mathrm{d} y}{\mathrm{d} t} & = \frac{\delta x y}{\omega x^2 + 1 + x} - m y - \epsilon y^2.
	\end{split}
\end{equation}

Also, the system (\ref{4iscda}) gets transformed to:
\begin{equation} \label{4iscd}
	\begin{split}
		\frac{\mathrm{d} x}{\mathrm{d} t} & = x \left(1-\frac{x}{\gamma} \right)- \frac{xy}{(1+\alpha \xi)(\omega x^2 + 1) + x}, \\
		\frac{\mathrm{d} y}{\mathrm{d} t} & = \frac{\delta \left(x + \xi (\omega x^2 + 1)\right) y}{(1 + \alpha \xi)(\omega x^2 + 1) + x} - m y - \epsilon y^2.
	\end{split}
\end{equation}
Here the term $\frac{\eta A^2}{N}$ denotes the quantity of additional food perceptible to the predator with respect to the prey relative to the nutritional value of prey to the additional food. Hence the term $\xi = \frac{\eta A}{a}$ can be seen to be an equivalent of {\textit {\bf{quantity}} }of additional food.

\section{Positivity and boundedness of the solution} \label{sec:4iscdposi}

\subsection{Positivity of the solution}

In this section, we demonstrate that the positive $xy$-quadrant is an invariant region for the system (\ref{4iscd}). Specifically, this means that if the initial populations of both prey and predator start in the positive $xy$-quadrant (i.e., $x(0) > 0$ and $y(0) > 0$), they will remain within this quadrant for all future times.

If prey population goes to zero (i.e., $x(t)=0$), then it is observed from the model equations (\ref{4iscd}) that $\frac{\mathrm{d} x}{\mathrm{d} t} = 0.$ This means that the prey population is constant (remains at zero) and cannot be negative. This holds even for the case when predator population goes to zero (i.e., $y=0$). Notably, $x=0$ and $y=0$ serve as invariant manifolds, with $\frac{\mathrm{d} x}{\mathrm{d} t} \Big|_{x=0} = 0$ and $\frac{\mathrm{d} y}{\mathrm{d} t} \Big|_{y=0} = 0$. Therefore, if a solution initiates within the confines of the positive $xy$-quadrant, it will either remains positive or stays at zero eternally (i.e., $x(t) \geq 0$ and $y(t) \geq 0 \ \forall t>0$ if $x(0)>0$ and $y(0)>0$). 

\subsection{Boundedness of the solution}

\begin{thm}
	Every solution of the system (\ref{4iscd}) that starts within the positive quadrant of the state space remains bounded. \label{4iscdbound}
\end{thm}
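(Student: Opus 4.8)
The plan is to bound the two populations in turn, dominating each equation of (\ref{4iscd}) by an autonomous logistic ODE and invoking the standard differential-inequality (comparison) principle: if $\dot u \le g(u)$, $\dot v = g(v)$, and $u(0)=v(0)$, then $u(t)\le v(t)$ for all $t\ge 0$. Throughout I use that the positive quadrant is invariant, so $x(t)\ge 0$ and $y(t)\ge 0$, as established in the previous subsection.

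First I would bound the prey. Because $x(t)\ge 0$ and $y(t)\ge 0$, the predation term $\frac{xy}{(1+\alpha\xi)(\omega x^2+1)+x}$ in the first equation is nonnegative, so dropping it yields the logistic inequality $\frac{dx}{dt}\le x\bigl(1-\frac{x}{\gamma}\bigr)$. Comparison with the logistic equation then gives $x(t)\le M:=\max\{x(0),\gamma\}$ for all $t\ge 0$, and in particular $\limsup_{t\to\infty} x(t)\le\gamma$. Hence the prey component is uniformly bounded.

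Next I would bound the predator. The key observation is that the predator's per-capita intake
\[
f(x)=\frac{\delta\bigl(x+\xi(\omega x^2+1)\bigr)}{(1+\alpha\xi)(\omega x^2+1)+x}
\]
is continuous and nonnegative on $[0,\infty)$ and approaches the finite limit $\frac{\delta\xi}{1+\alpha\xi}$ both as $x\to 0^+$ and as $x\to\infty$, so it is bounded above by a constant $L>0$ independent of $t$. Substituting into the second equation gives the logistic inequality $\frac{dy}{dt}\le (L-m)y-\epsilon y^2$, and a second application of the comparison principle shows that $y(t)$ stays bounded (with $\limsup_{t\to\infty} y(t)\le\frac{L-m}{\epsilon}$ when $L>m$, and $y(t)\to 0$ otherwise). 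Together the two bounds prove that every trajectory starting in the positive quadrant remains bounded.

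The only step needing genuine care is the uniform bound on $f(x)$: one must verify that the quadratic term $\delta\xi\omega x^2$ in the numerator is controlled by the $(1+\alpha\xi)\omega x^2$ term in the denominator so that $f$ does not blow up at large prey density. I would emphasize that it is the intra-specific competition term $-\epsilon y^2$, rather than any feature of the functional response, that ultimately caps the predator, so the argument survives even if $f$ is only bounded on the compact range $[0,M]$ of the prey. An equivalent and slightly cleaner route is to use the single function $W=\delta x+y$: a short computation exploiting $\frac{\omega x^2+1}{(1+\alpha\xi)(\omega x^2+1)+x}\le\frac{1}{1+\alpha\xi}$ yields $\frac{dW}{dt}+\lambda W\le C$ for suitable constants $\lambda,C>0$, whence $W$, and therefore both $x$ and $y$, is bounded; I would present this as the tidier alternative if the two-step comparison reads as piecemeal.
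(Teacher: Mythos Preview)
Your proposal is correct. Your primary two-step argument---bound $x$ by logistic comparison, then bound $y$ using that the per-capita intake $f(x)$ is uniformly bounded together with the $-\epsilon y^2$ term---is a genuinely different route from the paper's. The paper instead works with the single function $W=x+\frac{1}{\delta}y$, uses the same key estimate $\frac{\omega x^{2}+1}{(1+\alpha\xi)(\omega x^{2}+1)+x}\le\frac{1}{1+\alpha\xi}$ to obtain $\frac{dW}{dt}+KW\le M$, and then concludes via Gronwall. Your ``tidier alternative'' with $W=\delta x+y$ is, up to a constant factor, exactly the paper's argument. The two-step comparison makes the role of the intra-specific competition term $-\epsilon y^{2}$ completely transparent and requires no cross-coupling estimate, while the Lyapunov route is shorter and yields a single explicit absorbing bound in one stroke; either is perfectly acceptable here.
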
 

\begin{proof}
	We define $W = x + \frac{1}{\delta}y$. Now, for any $K > 0$, we consider,
	\begin{equation*}
		\begin{split}
			\frac{\mathrm{d} W}{\mathrm{d} t} + K W = & \frac{\mathrm{d} x}{\mathrm{d} t} + \frac{1}{\delta} \frac{\mathrm{d} y}{\mathrm{d} t} + K x + \frac{K}{\delta} y \\
			& = x \left(1-\frac{x}{\gamma} \right)- \frac{xy}{(1+\alpha \xi)(\omega x^2 + 1) + x} \\
			&\  +\frac{1}{\delta}  \left( \frac{\delta \left(x + \xi (\omega x^2 + 1)\right) y}{(1 + \alpha \xi)(\omega x^2 + 1) + x} - m y - \epsilon y^2 \right) + K x + \frac{K}{\delta} y \\
			& =  x -\frac{x^2}{\gamma} - \frac{xy}{(1+\alpha \xi)(\omega x^2 + 1) + x} + \frac{\left(x + \xi (\omega x^2 + 1)\right) y}{(1 + \alpha \xi)(\omega x^2 + 1) + x} \\ 
			& \ - \frac{m}{\delta} y - \frac{\epsilon}{\delta} y^2 + K x + \frac{K}{\delta} y \\
			& =  x -\frac{x^2}{\gamma} + \frac{\xi (\omega x^2 + 1) y}{(1 + \alpha \xi)(\omega x^2 + 1) + x} - \frac{m}{\delta} y - \frac{\epsilon}{\delta} y^2 + K x + \frac{K}{\delta} y \\
			& \leq  x -\frac{x^2}{\gamma} + \frac{\xi y}{\alpha \xi+ 1} - \frac{m}{\delta} y - \frac{\epsilon}{\delta} y^2 + K x + \frac{K}{\delta} y \\
			& = (1+K) x -\frac{x^2}{\gamma} + \left( \frac{\xi}{1+\alpha \xi} + \frac{K}{\delta} - \frac{m}{\delta} \right) y - \frac{\epsilon}{\delta} y^2 \\
			& \leq \frac{\gamma (1+K)^2}{4} + \frac{\delta}{4 \epsilon} \left( \frac{\xi}{1+\alpha \xi} + \frac{K}{\delta} - \frac{m}{\delta} \right)^2 = M (say,)
		\end{split}
	\end{equation*}
	$$\frac{\mathrm{d} W}{\mathrm{d} t} + K W \leq M.$$
	Using Gronwall's inequality \cite{howard1998gronwall}, we now find an upper bound on $W(t)$. 
	
	This inequality is in the standard linear first-order form, and we solve it by multiplying both sides by an integrating factor $e^{Kt}$. This simplify the above inequality:
	$$\frac{\mathrm{d}}{\mathrm{d} t} (W(t) e^{Kt}) \leq M e^{Kt}.$$
	Now, integrating both sides from $0$ to $t$, we get
	$$0 \leq W(t) \leq \frac{M}{K} (1 - e^{-Kt}) + W(0) e^{-Kt}.$$
	Therefore, $0 < W(t) \leq \frac{M}{K}$ as $t \rightarrow \infty$. This demonstrates that the solutions of system (\ref{4iscd}) are ultimately bounded, thereby proving Theorem \autoref{4iscdbound}
\end{proof}

The \text{Picard-Lindel\"of theorem} guarentees the existence of a unique solution that exists locally in time for the system (\ref{4iscd}), given any initial conditions $x(0) = x_0 > 0$ and $y(0) = y_0 > 0$. This happens because the RHS terms in (\ref{4iscd}) are continuous and locally Lipschitz. Since the solution does not blow up in finite time (i.e., that the solution exists for all $t \geq 0$), global existence is also guaranteed.

\section{Existence of Equilibria} \label{sec:4iscdequil}

In this section, we investigate the existence of various equilibria that system (\ref{4iscd}) admits and study their stability nature. We first discuss the nature of nullclines of the considered system and the asymptotic behavior of its trajectories. We consider the biologically feasible parametric constraint $\delta > m$. 

The prey nullclines of the system (\ref{4iscd}) are given by 
$$ x = 0 , \  \  1-\frac{x}{\gamma} - \frac{y}{(1+\alpha \xi)(\omega x^2 + 1) + x} = 0. $$

The predator nullclines of the system (\ref{4iscd}) are given by
$$ y = 0, \ \ \frac{\delta \left(x + \xi (\omega x^2 + 1)\right)}{(1 + \alpha \xi)(\omega x^2 + 1) + x} - m - \epsilon y= 0. $$

Upon simplification, the non trivial prey nullcline is given as
\begin{equation} \label{4iscdprey}
	y = \left(1 - \frac{x}{\gamma} \right) \left( \left(1 + \alpha \xi \right) \left(\omega x^2 + 1 \right) + x\right).
\end{equation}

This nullcline is a smooth curve which is a cubic equation that passes through the point $(\gamma, 0)$ and $(0,1+\alpha \xi)$. Now, the slope of this nullcline is given by:
\begin{equation*}
	y'(x) = \left[1 - \frac{1 +\alpha \xi}{\gamma}\right]+2 \left[ \omega  \left( 1 + \alpha \xi \right) - \frac{1}{\gamma}\right] x - \frac{3 \omega}{\gamma} \left(1 + \alpha \xi \right) x^2.
\end{equation*}

This prey nullcline has negative slope at $x=\gamma$ (i.e.,$ y'(\gamma) = -1 - \gamma \omega (1+\alpha \xi) - \frac{1+\alpha \xi}{\gamma} < 0$). This nullcline has negative slope at $x=0$ if $\gamma < 1 + \alpha \xi$ and has a positive slope otherwise. The discriminant of the slope is given by 
\begin{equation*}
	\triangle = \frac{4}{\gamma^2} \left[ 1 + \gamma \omega \left(1 + \alpha \xi \right) + \omega \left(1+ \alpha \xi \right)^2 \left(\omega \gamma^2 - 3\right)\right].
\end{equation*} 

If $\omega \gamma^2 \geq 3$, then $\triangle > 0$. Therefore, prey nullcline has maximum and minimum in the interval $(0,\gamma)$ if $\omega \gamma^2 \geq 3$. Else this curve decreases monotonically from $0$ to $\gamma$.

We now have the following cases describing the existence of asymptotes for prey nullcline in the interval $[0,\gamma]$.

\begin{itemize}
	\item \textit{Case 1 $\left[ \gamma > 1 + \alpha \xi \right]$}: In this case, prey nullcline touches positive $y$-axis at $(0,1+\alpha \xi)$ and the slope increases from this point. After reaching a finite local maxima, the curve reaches $(0,\gamma)$.
	\item \textit{Case 2 $\left[ \gamma < 1 + \alpha \xi \text{ and } \omega \gamma^2 \geq 3 \right]$}: In this case, prey nullcline passes through $(0,1+\alpha \xi)$ and $(\gamma,0)$ and has a local maxima and local minima in the interval $[0,\gamma]$.
	\item \textit{Case 3 $\left[ \gamma < 1 + \alpha \xi \text{ and } \omega \gamma^2 < 3 \right]$}: In this case, prey nullcline monotonically decreases from $x=0$ to $x=\gamma$.
\end{itemize}

The non trivial predator nullcline is given as
$$ y = \frac{1}{\epsilon} \Bigg[ \frac{\delta \left(x + \xi (\omega x^2 + 1)\right)}{(1 + \alpha \xi)(\omega x^2 + 1) + x} - m \Bigg]. $$

In the absence of mutual interference (i.e., $\epsilon = 0$), these nullclines are straight lines parallel to $y$-axis. Upon simplification, the non trivial predator nullcline is
\begin{equation} \label{4iscdpredator}
	y = \frac{(\delta - m) x + \left(\delta \xi - m (1+\alpha \xi) \right) \left( \omega x^2 + 1 \right)}{ \epsilon \left( (1 + \alpha \xi)(\omega x^2 + 1) + x\right)}.
\end{equation}

This predator nullcline passes through $\left(0,\frac{\delta \xi - m (1+\alpha \xi)}{\epsilon (1+\alpha \xi)}\right)$. This point will be on the positive-$y$ axis if $\delta \xi - m (1+\alpha \xi) > 0$. 

Also this nullcline touches the $x$-axis at $$\left(\frac{-(\delta - m) \pm \sqrt{\triangle}}{2 \omega (\delta \xi - m(1+\alpha \xi))},0\right)\text{ where }\triangle  = (\delta - m)^2 - 4 \omega (\delta \xi - m(1+\alpha \xi))^2.$$ This point will be on the positive-$x$ axis if $\triangle > 0$ and $\delta \xi - m (1+\alpha \xi) < 0.$ These two conditions can be summarised as $\frac{-(\delta - m)}{2 \sqrt{\omega}} < \delta \xi - m (1+\alpha \xi) < 0$. It is also observed that the predator nullcline never reaches $\infty$ in finite time. 

The slope of the predator nullcline is given by
$$\frac{\mathrm{d} y}{\mathrm{d} x} =   \frac{\delta \left(1 - \omega x^2 \right) \left( 1 + \alpha \xi - \xi \right)}{ \epsilon \left(\left(1 + \alpha \xi \right) \left(\omega x^{2} + 1\right) + x\right)^{2}}.$$

The slope is positive when $x=0$ and  the slope is $0$ when $x = \pm \frac{1}{\sqrt{\omega}}$. From the second derivative test, predator nullcline attains maximum at $x=\frac{1}{\sqrt{\omega}}$ and the maximum value is given as $\left( \frac{\delta - m}{\sqrt{\omega}} \right) + 2 \left( \delta \xi - m (1+\alpha \xi) \right)  > 0$ i.e., $\delta \xi - m (1+\alpha \xi) > \frac{-(\delta - m)}{2 \sqrt{\omega}}$. Hence we have the sufficient condition for the existence of predator nullcline in the first quadrant is 
\begin{equation} \label{condition14iscd}
	\delta \xi - m (1+\alpha \xi) > \frac{-(\delta - m)}{2 \sqrt{\omega}}.
\end{equation}

The qualitative behavior of the predator nullcline can be understood in two scenarios:
\begin{itemize}
	\item \textit{Case P $\left[\delta \xi - m (1+\alpha \xi) > 0 \right]$}: Predator nullcline touches only positive $y$-axis and not pass through positive $x$-axis. 
	\item \textit{Case Q $\left[0 > \delta \xi - m (1+\alpha \xi) > \frac{-(\delta - m)}{2 \sqrt{\omega}} \right]$}: Predator nullcline touches only positive $x$-axis and not pass through positive $y$-axis. 
\end{itemize}

The possible configurations for the 1-3 cases of prey nullcline with cases P-Q of predator nullclines is presented in \autoref{nullclineplot4iscd}. The subfigures $A-C$ in \autoref{nullclineplot4iscd} represent the intersection of cases $1-3$ of prey nullcline (represented in solid blue line) with the case $P$ of the predator nullcline (represented in solid green line). The subfigures $D-F$ represent the similar scenarios for the case $Q$ of the predator nullcline. From this figure, it is observed that the interior equillibrium point exists for the system (\ref{4iscd}).

\begin{figure}[ht]
	\includegraphics[width=\textwidth]{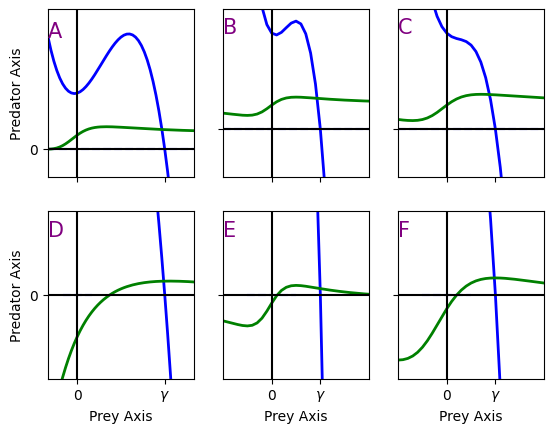}
	\caption{The possible configurations for the prey and predator nullclines of the system (\ref{4iscd}).}
	\label{nullclineplot4iscd}
\end{figure}

The intersection of these prey and predator nullclines will result in the following equilibria for the system (\ref{4iscd}).

\begin{itemize}
	\item Trivial equilibrium $E_0 = (0,0).$ 
	\item Predator free equilibrium $E_1 = (\gamma,0).$
	\item Pest free equilibrium $E_2 = \left(0,\frac{\delta \xi - m (1+\alpha \xi)}{\epsilon  (1+\alpha \xi)}\right)$. 
	\item Interior equilibrium $E^* = (x^*,y^*).$
\end{itemize}

The trivial ($E_0$) and axial equilibria ($E_1$) always exist for the system (\ref{4iscd}). Whereas the another axial equilibrium $E_2$ will be in the positive $xy$-quadrant if and only if $\delta \xi - m (1+\alpha \xi) > 0$. Since $\delta > m$, this equilibrium will be in the positive $xy$-quadrant for small $\alpha$ and large $\xi$. In the absence of additional food, this axial equilibrium $E_2 = \left(0,\frac{-m}{\epsilon}\right)$ exists on the negative $y$-axis.

The interior equilibrium of the system (\ref{4iscd}), if exists, is given by $E^* = (x^*, y^*)$  which is the point of intersection of the non trivial prey nullcline (\ref{4iscdprey}) and the non trivial predator nullcline (\ref{4iscdpredator}).

Upon simplification, $E^* = (x^*,y^*)$ is given by
\begin{equation} \label{4iscdystar}
	y^* = \frac{(\delta - m) x^* + \left(\delta \xi - m (1+\alpha \xi) \right) \left( \omega {x^{*}}^2 + 1 \right)}{ \epsilon \left( (1 + \alpha \xi)(\omega {x^{*}}^2  + 1) + x^*\right)},
\end{equation}
and $x^*$ should satisfy the following fifth order equation
\begin{equation} \label{4iscdxstar}
	\begin{split}
		\frac{\epsilon \omega^2}{\gamma} \left(1+\alpha \xi \right)^2 x^5 + \frac{\epsilon \omega (1+\alpha \xi)}{\gamma} \left(2 - \gamma \omega \left(1 + \alpha \xi \right)\right) x^4 & \\
		+ \frac{\epsilon}{\gamma} \left( 1 + 2 \omega \left(1 + \alpha \xi \right)^2 - 2 \gamma \omega \left(1 + \alpha \xi \right)\right) x^3 & \\
		+ \left( \omega \left( \delta \xi - m (1 + \alpha \xi)\right) + \frac{2 \epsilon }{\gamma} (1+\alpha \xi) - \epsilon - 2 \epsilon \omega (1 + \alpha \xi)^2 \right) x^2 & \\
		+ \left( \delta - m - 2 \epsilon (1+\alpha \xi)  + \frac{\epsilon}{\gamma} (1+\alpha \xi)^2 \right) x & \\
		+  \delta \xi - m (1 + \alpha \xi) - \epsilon (1+\alpha \xi)^2 & = 0.
	\end{split}
\end{equation}

This equation has at most five real roots. However, the Abel–Ruffini theorem states that there is no solution in radicals to general polynomial equations of degree five or higher with arbitrary coefficients. The study of nullclines numerically depicted the existence of atleast one interior equilibrium. 

The results obtained so far can be summarized as 

\begin{lem} \label{4iscdintcond}
	The system (\ref{4iscd}) exhibits at least one interior equilibrium $(E^* = (x^*,y^*))$ which is a solution of the equations (\ref{4iscdystar}) and (\ref{4iscdxstar}) and satisfying the condition $\delta \xi - m (1+\alpha \xi) > \frac{-(\delta - m)}{2 \sqrt{\omega}}$.
\end{lem}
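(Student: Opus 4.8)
The plan is to recognize that an interior equilibrium is exactly a point $(x^*,y^*)$ with $x^*,y^*>0$ lying simultaneously on the two nontrivial nullclines (\ref{4iscdprey}) and (\ref{4iscdpredator}). Since (\ref{4iscdystar}) already expresses $y^*$ in terms of $x^*$ through the predator nullcline, and since the quintic (\ref{4iscdxstar}) is obtained by substituting (\ref{4iscdprey}) into (\ref{4iscdpredator}) and clearing the strictly positive common denominator $(1+\alpha\xi)(\omega x^2+1)+x$, the whole statement reduces to producing a single root $x^*\in(0,\gamma)$ of (\ref{4iscdxstar}). The restriction to $(0,\gamma)$ is what forces positivity of the second coordinate: for any $x^*\in(0,\gamma)$ the prey-nullcline value $y^*=\left(1-\tfrac{x^*}{\gamma}\right)\big((1+\alpha\xi)(\omega {x^*}^2+1)+x^*\big)$ is a product of strictly positive factors, so $y^*>0$ is automatic and the equilibrium lies in the open first quadrant.

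To locate such a root I would set up an Intermediate Value Theorem argument. Define $h(x):=y_{\text{prey}}(x)-y_{\text{pred}}(x)$ on $[0,\gamma]$; it is continuous because the shared denominator never vanishes there, and multiplying $h$ by that positive denominator and by $\epsilon$ shows that $\operatorname{sign} h(x)$ coincides, up to one fixed overall sign, with the sign of the quintic (\ref{4iscdxstar}). Hence any sign change of $h$ on $(0,\gamma)$ delivers the desired $x^*$. The boundary values are the natural starting point: at $x=0$ one has $y_{\text{prey}}(0)=1+\alpha\xi$ and $y_{\text{pred}}(0)=\frac{\delta\xi-m(1+\alpha\xi)}{\epsilon(1+\alpha\xi)}$, so $\operatorname{sign} h(0)=\operatorname{sign}\!\big(\epsilon(1+\alpha\xi)^2-(\delta\xi-m(1+\alpha\xi))\big)$, whereas at $x=\gamma$ the prey nullcline vanishes, giving $\operatorname{sign} h(\gamma)=-\operatorname{sign}\!\big((\delta-m)\gamma+(\delta\xi-m(1+\alpha\xi))(\omega\gamma^2+1)\big)$.

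I would then organize the sign bookkeeping through the predator-nullcline dichotomy already isolated in the text. In Case P ($\delta\xi-m(1+\alpha\xi)>0$) the numerator of $y_{\text{pred}}$ has only nonnegative coefficients, so $h(\gamma)<0$; in Case Q ($\delta\xi-m(1+\alpha\xi)<0$) one instead gets $h(0)>0$ for free. Whenever these two endpoint signs turn out opposite, the situation visible in the configurations of \autoref{nullclineplot4iscd}, the Intermediate Value Theorem applied on $[0,\gamma]$ immediately yields $x^*\in(0,\gamma)$ and completes the argument.

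The hard part will be the degenerate regime where $h(0)$ and $h(\gamma)$ happen to share a sign, since then the endpoints alone do not certify a crossing. There I would insert the interior test abscissa $x=\tfrac{1}{\sqrt{\omega}}$, at which the predator nullcline attains its maximum value $\frac{\delta-m}{\sqrt{\omega}}+2\big(\delta\xi-m(1+\alpha\xi)\big)$; this is precisely the quantity that condition (\ref{condition14iscd}) forces to be positive, which is exactly why that hypothesis appears in the statement. Comparing $y_{\text{pred}}(1/\sqrt{\omega})$ against the prey-nullcline value there, using the monotonicity and extremum analysis recorded in Cases 1–3 for $y_{\text{prey}}$, should produce the missing interior sign change. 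I would stress that the quintic's odd degree with positive leading coefficient only guarantees \emph{some} real root; all the genuine work lies in confining a root to $(0,\gamma)$, and it is this localization, together with the case split above, that constitutes the core of the proof.
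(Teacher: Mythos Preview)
Your proposal takes a genuinely different route from the paper. The paper does not give a formal proof of the lemma at all: it analyses the shapes of the two nontrivial nullclines qualitatively (Cases 1--3 for the prey curve, Cases P--Q for the predator curve), records that condition (\ref{condition14iscd}) is exactly what makes the maximum of the predator nullcline positive, and then simply states that ``the study of nullclines numerically depicted the existence of at least one interior equilibrium,'' pointing to \autoref{nullclineplot4iscd}. The lemma is presented as a summary of that discussion rather than as a theorem with an analytic demonstration.

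Your Intermediate Value Theorem scheme is therefore a substantial upgrade in rigour: you correctly isolate the endpoint signs $h(0)$ and $h(\gamma)$, link them to the stability thresholds of $E_1$ and $E_2$, and recognise that the hypothesis $\delta\xi-m(1+\alpha\xi)>-(\delta-m)/(2\sqrt{\omega})$ is precisely the positivity of $y_{\text{pred}}(1/\sqrt{\omega})$, which the paper only uses descriptively. Two caveats are worth flagging before you finalise the argument. First, your interior test abscissa $1/\sqrt{\omega}$ may fall outside $(0,\gamma)$ (namely when $\omega\gamma^2<1$), so in that subcase you cannot use it directly and must supply an alternative comparison point. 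Second, in the ``hard'' regime where $h(0)$ and $h(\gamma)$ share a sign, the assertion that comparing $y_{\text{pred}}(1/\sqrt{\omega})$ with $y_{\text{prey}}(1/\sqrt{\omega})$ ``should produce the missing interior sign change'' is plausible from the pictures but is not yet a proof; you will need an explicit inequality there, and it is not obvious that condition (\ref{condition14iscd}) alone suffices without further parameter restrictions. The paper sidesteps both issues by appealing to the figure, which is exactly the gap your approach is designed to close.
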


In the absence of mutual interference (i.e., $\epsilon = 0$), $x^*$ is a solution of the quadratic equation $$\omega (\delta \xi - m (1 + \alpha \xi)) x^2 + (\delta - m) x + \delta \xi - m (1 + \alpha \xi)  = 0$$ and $$y^* = \left(1-\frac{x^*}{\gamma}\right) \left[(1+\alpha \xi) (\omega {x^{*}}^2 + 1) + x^*\right].$$ It also satisfies the condition $\delta \xi - m (1+\alpha \xi) > \frac{-(\delta - m)}{2 \sqrt{\omega}}$.

\section{Stability of Equilibria} \label{sec:4iscdstab}

In order to obtain the asymptotic behavior of the trajectories of the system (\ref{4iscd}), the associated Jacobian matrix is given by 

$$J = \begin{bmatrix}
	\frac{\partial}{\partial x} f(x,y)  & \frac{\partial}{\partial y} f(x,y)\\
	\frac{\partial}{\partial x} g(x,y) & \frac{\partial}{\partial y} g(x,y)
\end{bmatrix},$$
where
\begin{eqnarray*}
	f(x,y) &=& x \left(1-\frac{x}{\gamma} \right)- \frac{xy}{(1+\alpha \xi)(\omega x^2 + 1)+x}, \\
	g(x,y) &=& \frac{\delta \left( x + \xi (\omega x^2 + 1)\right)}{(1+\alpha \xi)(\omega x^2 + 1)+x} y - m y - \epsilon y^2,
\end{eqnarray*}
and 
\begin{eqnarray*}
	\frac{\partial}{\partial x} f(x,y) &=& 1 - \frac{2 x}{\gamma} - \frac{y \left(\alpha \xi + 1\right) \left(1 - \omega x^2 \right)}{\left((1+\alpha \xi)(\omega x^2 + 1)+x\right)^{2}}, \\
	\frac{\partial}{\partial y} f(x,y) &=& \frac{- x}{(1+\alpha \xi)(\omega x^2 + 1)+x}, \\
	\frac{\partial}{\partial x} g(x,y) &=&\frac{\delta y \left(1 - \omega x^{2}\right) \left(1 + \alpha \xi - \xi \right)}{\left((1+\alpha \xi)(\omega x^2 + 1)+x\right)^{2}}, \\
	\frac{\partial}{\partial y} g(x,y) &=& \frac{\delta \left(x + \xi \left(\omega x^{2} + 1\right)\right)}{(1+\alpha \xi)(\omega x^2 + 1)+x} - 2 \epsilon y - m.
\end{eqnarray*}

At the trivial equilibrium $E_0 = (0,0)$, we obtain the jacobian as 

\begin{equation*}
	J\left( E_0 \right) = \begin{bmatrix}
		1  & 0 \\
		0 & \frac{\delta \xi - m (1+ \alpha \xi)}{1+\alpha \xi}
	\end{bmatrix}.
\end{equation*}

The eigen values of this jacobian matrix are $1, \frac{\delta \xi - m (1+ \alpha \xi)}{1+\alpha \xi}$. If $\delta \xi - m (1+ \alpha \xi) > 0$, then both the eigen values have same signs. This makes the equilibrium $E_0 = (0,0)$ unstable. If $\delta \xi - m (1+ \alpha \xi) < 0$, then both the eigen values will have opposite signs. This makes the point $E_0 = (0,0)$ a saddle point. In the absence of additional food, $E_0 = (0,0)$ is a saddle point. 

At the axial equilibrium $E_1 = (\gamma ,0)$, we obtain the jacobian as 

\begin{equation*}
	J(E_1) = \begin{bmatrix}
		-1  & \frac{-\gamma}{(1+ \alpha \xi)(\omega \gamma^2 + 1) +\gamma} \\
		0 & \frac{(\delta - m) \gamma + (\delta \xi - m (1+\alpha \xi))(\omega \gamma^2 + 1)}{(1+ \alpha \xi)(\omega \gamma^2 + 1)+\gamma}
	\end{bmatrix}.
\end{equation*}

The eigen values of this jacobian matrix are $$-1,\  \frac{(\delta - m) \gamma + (\delta \xi - m (1+\alpha \xi))(\omega \gamma^2 + 1)}{(1+ \alpha \xi)(\omega \gamma^2 + 1)+\gamma}.$$

If $(\delta - m) \gamma + (\delta \xi - m (1+\alpha \xi))(\omega \gamma^2 + 1) > 0$, then both the eigenvalues will have opposite sign resulting in a saddle point. If $(\delta - m) \gamma + (\delta \xi - m (1+\alpha \xi))(\omega \gamma^2 + 1) < 0$, then it will be an asymptotically stable node. In the absence of additional food,  $E_1 = (\gamma, 0)$ is a saddle point if $\frac{\delta}{m} > 1 + \frac{\omega \gamma^2 + 1}{\gamma}$. Else it is a stable equilibrium. 

We now consider another axial equilibrium which exists only for the additional food provided system (\ref{4iscd}). At this axial equilibrium $E_2 = (0, \frac{\delta \xi - m (1+\alpha \xi)}{\epsilon (1+\alpha \xi)})$, the associated jacobian matrix is given as 

\begin{equation*}
	J(E_2) = \begin{bmatrix}
		1-\frac{\delta \xi - m (1+\alpha \xi)}{\epsilon (1+\alpha \xi)^2}  & 0 \\
		\frac{\delta}{\epsilon (1+\alpha \xi)^3} \left(\delta \xi - m (1+\alpha \xi ) \right) \left(1+(\alpha - 1) \xi \right)& \frac{-(\delta \xi - m (1+\alpha \xi))}{1+\alpha \xi}
	\end{bmatrix}.
\end{equation*}

The eigen values for this jacobian matrix are 
$$1-\frac{\delta \xi - m (1+\alpha \xi)}{\epsilon (1+\alpha \xi)^2},\ \frac{-(\delta \xi - m (1+\alpha \xi))}{1+\alpha \xi}.$$ 

Since this equilibrium exists in positive $xy$-quadrant only when $\delta \xi - m (1+\alpha \xi) > 0$, the second eigenvalue of the associated jacobian matrix is always negative. Therefore, the two eigenvalues will have same negative sign and result in a stable equilibrium when $\delta \xi - m (1+\alpha \xi) > \epsilon (1+\alpha \xi)^2 > 0$. If $\epsilon (1+\alpha \xi)^2 > \delta \xi - m (1+\alpha \xi) > 0$, then eigenvalues are of opposite sign resulting in a saddle equilibrium. 

The following lemmas present the stability nature of the trivial and axial equilibria. 

\begin{lem}
	The trivial equilibrium $E_0 = (0,0)$ is saddle (unstable node) if 
	\begin{equation*}
		\delta \xi - m (1+\alpha \xi) < (>)\  0.
	\end{equation*}
\end{lem}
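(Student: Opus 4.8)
The plan is to classify $E_0$ directly from its linearization, exploiting the fact that for a diagonal Jacobian the eigenvalues can be read off without any genuine computation. First I would substitute $(x,y)=(0,0)$ into the partial derivatives of $f$ and $g$ recorded at the start of this section. The entry $\partial_x g$ carries an explicit factor of $y$ and hence vanishes at $y=0$, while $\partial_y f = -x/[(1+\alpha\xi)(\omega x^2+1)+x]$ vanishes at $x=0$; this forces $J(E_0)$ to be diagonal, exactly as displayed in the excerpt:
\[
J(E_0)=\begin{bmatrix} 1 & 0 \\ 0 & \dfrac{\delta\xi - m(1+\alpha\xi)}{1+\alpha\xi}\end{bmatrix}.
\]

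The eigenvalues are therefore the diagonal entries, $\lambda_1 = 1$ and $\lambda_2 = \dfrac{\delta\xi - m(1+\alpha\xi)}{1+\alpha\xi}$, and the remaining work is a single sign analysis. Since all model parameters are positive we have $1+\alpha\xi>0$, so $\lambda_2$ inherits exactly the sign of its numerator $\delta\xi - m(1+\alpha\xi)$, whereas $\lambda_1=1>0$ holds unconditionally. Consequently, if $\delta\xi - m(1+\alpha\xi)>0$ both eigenvalues are strictly positive and $E_0$ is an unstable node, while if $\delta\xi - m(1+\alpha\xi)<0$ the eigenvalues have opposite signs and $E_0$ is a saddle. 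To make this rigorous for the nonlinear system I would invoke the standard linearization (Hartman--Grobman) argument: in both regimes $J(E_0)$ is hyperbolic, since $\lambda_1=1$ and $\lambda_2\neq 0$ whenever $\delta\xi\neq m(1+\alpha\xi)$, so the local phase portrait of (\ref{4iscd}) matches that of its linearization.

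Candidly, there is no substantive obstacle here: the whole argument collapses to noticing that the Jacobian is diagonal and tracking one sign, and the groundwork is already laid out in the text preceding the statement. The only point requiring a moment's care is the boundary case $\delta\xi - m(1+\alpha\xi)=0$, where $\lambda_2=0$ and $E_0$ becomes non-hyperbolic so the linearization alone no longer decides stability; this is precisely the threshold excluded by the strict inequalities in the statement, and it is where a transcritical bifurcation is anticipated, consistent with $E_2$ entering the positive quadrant exactly when $\delta\xi - m(1+\alpha\xi)$ changes sign.
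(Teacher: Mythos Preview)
Your proposal is correct and follows essentially the same approach as the paper: evaluate the Jacobian at $E_0$, observe it is diagonal with eigenvalues $1$ and $\dfrac{\delta\xi - m(1+\alpha\xi)}{1+\alpha\xi}$, and classify the equilibrium by the sign of the second eigenvalue. Your write-up is in fact slightly more thorough, as you explicitly justify the vanishing of the off-diagonal entries, invoke Hartman--Grobman for the passage to the nonlinear system, and flag the non-hyperbolic borderline case.
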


\begin{lem}
	The predator-free axial equilibrium $E_1 = (\gamma,0)$ is stable node (saddle) if 
	\begin{equation*}
		\delta \xi - m (1+\alpha \xi) < (>)\  \frac{-(\delta - m) \gamma}{\omega \gamma^2 + 1}.
	\end{equation*}
\end{lem}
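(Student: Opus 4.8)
The plan is to read off the eigenvalues directly from the Jacobian $J(E_1)$ established above and classify the equilibrium by their signs. Since the Jacobian at $E_1 = (\gamma,0)$ is upper triangular—its $(2,1)$ entry vanishes because $\frac{\partial g}{\partial x}$ carries a factor of $y$ that is zero at $E_1$—its eigenvalues are simply the two diagonal entries, namely $\lambda_1 = -1$ and
$$\lambda_2 = \frac{(\delta-m)\gamma + (\delta\xi - m(1+\alpha\xi))(\omega\gamma^2+1)}{(1+\alpha\xi)(\omega\gamma^2+1)+\gamma}.$$

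First I would observe that $\lambda_1 = -1 < 0$ unconditionally, so the qualitative type of $E_1$ is governed entirely by the sign of $\lambda_2$. Next I would note that the denominator of $\lambda_2$, namely $(1+\alpha\xi)(\omega\gamma^2+1)+\gamma$, is strictly positive for all admissible (nonnegative) parameter values, so the sign of $\lambda_2$ coincides with the sign of its numerator $(\delta-m)\gamma + (\delta\xi - m(1+\alpha\xi))(\omega\gamma^2+1)$.

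Then I would translate this sign condition into the stated inequality. The numerator is negative precisely when $(\delta\xi - m(1+\alpha\xi))(\omega\gamma^2+1) < -(\delta-m)\gamma$, and dividing through by the positive quantity $\omega\gamma^2+1$ (which preserves the inequality) yields exactly $\delta\xi - m(1+\alpha\xi) < \frac{-(\delta-m)\gamma}{\omega\gamma^2+1}$. Under this condition both eigenvalues are negative and $E_1$ is a locally asymptotically stable node; reversing the inequality makes $\lambda_2 > 0$, so $\lambda_1$ and $\lambda_2$ have opposite signs and $E_1$ is a saddle. This establishes the claimed dichotomy.

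There is no genuine analytic obstacle here—the argument is a routine sign analysis of a triangular $2\times 2$ Jacobian. The only point requiring a moment of care is confirming the positivity of the denominator so that dividing by $\omega\gamma^2+1$ does not reverse the inequality. The boundary case of equality in the stated condition ($\lambda_2 = 0$) is non-hyperbolic and would demand a separate center-manifold or higher-order treatment; I would simply flag it as the degenerate transition already anticipated by the bifurcation analysis in Section \ref{sec:4iscdbifur}.
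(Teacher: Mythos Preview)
Your proposal is correct and follows essentially the same approach as the paper: compute the upper-triangular Jacobian at $E_1$, read off the eigenvalues $-1$ and $\lambda_2$, and classify the equilibrium by the sign of the numerator of $\lambda_2$ (equivalently, by the stated inequality after dividing through by $\omega\gamma^2+1$). If anything, your write-up is slightly more explicit than the paper's in justifying why the denominator is positive and flagging the non-hyperbolic boundary case.
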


\begin{lem}
	The axial equilibrium $E_2 = \left(0, \frac{\delta \xi - m (1+\alpha \xi)}{\epsilon (1+\alpha \xi)}\right) $ exists in positive $xy$-quadrant and is stable node (saddle) if 
	\begin{equation*}
		\delta \xi - m (1+\alpha \xi) >\  0 \text{ and }\delta \xi - m (1+\alpha \xi) > (<) \epsilon (1+\alpha \xi)^2.
	\end{equation*}
\end{lem}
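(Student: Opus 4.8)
The plan is to read off the local stability of $E_2$ directly from the linearization, exploiting the fact that the Jacobian at an equilibrium sitting on the $y$-axis is triangular. First I would dispose of the existence claim: since $\epsilon > 0$ and $1 + \alpha\xi > 0$ under the standing assumptions, the $y$-coordinate $\frac{\delta\xi - m(1+\alpha\xi)}{\epsilon(1+\alpha\xi)}$ is strictly positive precisely when $\delta\xi - m(1+\alpha\xi) > 0$, which is exactly the condition that places $E_2$ in the open positive quadrant.

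Next I would evaluate the general Jacobian entries $\frac{\partial f}{\partial x}, \frac{\partial f}{\partial y}, \frac{\partial g}{\partial x}, \frac{\partial g}{\partial y}$ recorded above at $x = 0$ and $y = \frac{\delta\xi - m(1+\alpha\xi)}{\epsilon(1+\alpha\xi)}$. The crucial structural observation is that $\frac{\partial f}{\partial y} = \frac{-x}{(1+\alpha\xi)(\omega x^2 + 1) + x}$ vanishes identically at $x = 0$, so $J(E_2)$ is lower triangular. Its eigenvalues are therefore just the diagonal entries, and the (somewhat messy) off-diagonal term $\frac{\partial g}{\partial x}(E_2)$ plays no role whatsoever in the stability classification. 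Substituting $x=0$ yields
$$\lambda_1 = 1 - \frac{\delta\xi - m(1+\alpha\xi)}{\epsilon(1+\alpha\xi)^2}, \qquad \lambda_2 = \frac{-(\delta\xi - m(1+\alpha\xi))}{1+\alpha\xi}.$$

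Finally I would classify the signs. Under the existence condition $\delta\xi - m(1+\alpha\xi) > 0$ the eigenvalue $\lambda_2$ is automatically negative, so the type of $E_2$ is decided entirely by the sign of $\lambda_1$: it is negative exactly when $\frac{\delta\xi - m(1+\alpha\xi)}{\epsilon(1+\alpha\xi)^2} > 1$, i.e. $\delta\xi - m(1+\alpha\xi) > \epsilon(1+\alpha\xi)^2$, and positive when the reverse strict inequality holds. This produces a stable node in the former case (both eigenvalues negative) and a saddle in the latter (eigenvalues of opposite sign), matching the stated dichotomy. There is essentially no analytic obstacle here; the only thing demanding care is the bookkeeping of the nondimensional factors $\epsilon$ and $(1+\alpha\xi)$ when clearing denominators to reach the clean threshold. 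The borderline case $\lambda_1 = 0$ is excluded by the strict inequalities and would instead require a center-manifold or bifurcation treatment, presumably handled in the bifurcation section.
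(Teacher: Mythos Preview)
Your proposal is correct and follows essentially the same route as the paper: evaluate the Jacobian at $E_2$, read off the two diagonal eigenvalues $\lambda_1 = 1-\frac{\delta\xi - m(1+\alpha\xi)}{\epsilon(1+\alpha\xi)^2}$ and $\lambda_2 = \frac{-(\delta\xi - m(1+\alpha\xi))}{1+\alpha\xi}$, note that the existence condition forces $\lambda_2<0$, and then classify by the sign of $\lambda_1$. Your explicit remark that $\partial f/\partial y$ vanishes at $x=0$, making $J(E_2)$ lower triangular so the off-diagonal entry is irrelevant, is a nice clarification the paper leaves implicit.
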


\subsection{Stability of Interior Equilibrium}

The interior equilibrium $E^* = (x^*,y^*)$ is the solution of system of equations  (\ref{4iscdystar}) and (\ref{4iscdxstar}) and satisfying the conditions in Lemma \ref{4iscdintcond}.

At this co existing equilibrium $E^* = (x^*, y^*)$, we obtain the jacobian as 

$$ J(E^*) = \begin{bmatrix}
	\frac{\partial}{\partial x} f(x,y)  & \frac{\partial}{\partial y} f(x,y) \\
	\frac{\partial}{\partial x} g(x,y) & \frac{\partial}{\partial y} g(x,y)
\end{bmatrix} \Bigg|_{(x^*,y^*)} . $$

The associated characteristic equation is given by

\begin{equation}
	\lambda ^2 - \text{Tr } J \bigg|_{(x^*,y^*)} \lambda + \text{Det } J \bigg|_{(x^*,y^*)} = 0.
\end{equation}

Now
\begin{eqnarray*}
	\text{Det } J \big|_{(x^*,y^*)} &=& \frac{x \left(\frac{\delta y \left(x + \xi \left(\omega x^{2} + 1\right)\right) \left(- 2 \omega x \left(\alpha \xi + 1\right) - 1\right)}{\left(\left(1 + \alpha \xi\right) \left(\omega x^{2} + 1\right) + x\right)^{2}} + \frac{\delta y \left(2 \omega x \xi + 1\right)}{\left(1 + \alpha \xi\right) \left(\omega x^{2} + 1\right) + x}\right)}{\left(1 + \alpha \xi\right) \left(\omega x^{2} + 1\right) + x} \\
	& & + \left(\frac{\delta \left(x + \xi \left(\omega x^{2} + 1\right)\right)}{\left(1 + \alpha \xi\right) \left(\omega x^{2} + 1\right) + x} - 2 \epsilon y - m\right) \\
	& & \bigg(- \frac{x y \left(- 2 \omega x \left(\alpha \xi + 1\right) - 1\right)}{\left(\left(1 + \alpha \xi\right) \left(\omega x^{2} + 1\right) + x\right)^{2}} \\
	& & - \frac{y}{\left(1 + \alpha \xi\right) \left(\omega x^{2} + 1\right) + x} + 1 - \frac{2 x}{\gamma}\bigg) \bigg|_{(x^*,y^*)}.
\end{eqnarray*}

From (\ref{4iscd}), the following equations satisfy at the interior equilibrium $E^* = (x^*,y^*)$.
$$ 1-\frac{x}{\gamma} = \frac{y}{\left(1 + \alpha \xi\right) \left(\omega x^{2} + 1\right) + x}\  \bigg|_{(x^*,y^*)}.$$ and $$ \frac{\delta \left(x + \xi (\omega x^2 + 1) \right)}{\left(1 + \alpha \xi\right) \left(\omega x^{2} + 1\right) + x} = m + \epsilon y \  \bigg|_{(x^*,y^*)}.$$

Substituting these two equations in the definition of determinent, we have

\begin{eqnarray*}
	\text{Det } J \big|_{(x^*,y^*)} &=& \frac{xy \left(\frac{\left( m + \epsilon y \right) \left(- 2 \omega x \left(\alpha \xi + 1\right) - 1\right)}{\left(1 + \alpha \xi\right) \left(\omega x^{2} + 1\right) + x} + \frac{\delta \left(2 \omega x \xi + 1\right)}{\left(1 + \alpha \xi\right) \left(\omega x^{2} + 1\right) + x}\right)}{\left(1 + \alpha \xi\right) \left(\omega x^{2} + 1\right) + x} + \left( m + \epsilon y - 2 \epsilon y - m\right) \\
	& & \bigg(- \frac{x y \left(- 2 \omega x \left(\alpha \xi + 1\right) - 1\right)}{\left(\left(1 + \alpha \xi\right) \left(\omega x^{2} + 1\right) + x\right)^{2}} - 1 + \frac{x}{\gamma} + 1 - \frac{2 x}{\gamma}\bigg) \bigg|_{(x^*,y^*)}.
\end{eqnarray*}

Upon simplification, we have
\begin{eqnarray*}
	& & \text{Det } J \big|_{(x^*,y^*)} \\
	&=& \frac{x y \left(\left( m + 2 \epsilon y \right) \left(- 2 \omega x \left(\alpha \xi + 1\right) - 1\right)+ \delta \left(2 \omega x \xi + 1\right)\right)}{\left(\left(1 + \alpha \xi\right) \left(\omega x^{2} + 1\right) + x \right)^2} + \frac{\epsilon xy}{\gamma} \bigg|_{*} \\
	&=& \frac{x y \left( \delta - m + 2 \omega x \left(\delta \xi - m (1 + \alpha \xi )\right) - 2 \epsilon y \left( 2 \omega x (1+\alpha \xi ) + 1 \right)\right)}{\left(\left(1 + \alpha \xi\right) \left(\omega x^{2} + 1\right) + x \right)^2} + \frac{\epsilon xy}{\gamma} \bigg|_{*}.
\end{eqnarray*}

In the absence of mutual interference (i.e., $\epsilon = 0$), the determinent is positive when $\delta \xi - m(1+\alpha \xi) > \frac{-(\delta - m)}{2 \omega x^*}$. In the presence of mutual interference, the determinent is positive only when 

\begin{equation} \label{4iscdintdet}
	0 < \epsilon < \frac{\delta - m + 2 \omega x^* (\delta \xi - m (1+\alpha \xi))}{2 y^* (2 \omega x^*  (1+\alpha \xi)+1)}.
\end{equation}

The trace of the jacobian matrix is given by 

\begin{eqnarray*}
	\text{Tr } J \bigg|_{(x^*,y^*)}  &=& \frac{\delta \left(x + \xi \left(\omega x^{2} + 1\right)\right)}{\left(1 + \alpha \xi\right) \left(\omega x^{2} + 1\right) + x} - 2 \epsilon y - m + 1 - \frac{2 x}{\gamma} \\
	& & - \frac{x y \left(- 2 \omega x \left(\alpha \xi + 1\right) - 1\right)}{\left(\left(1 + \alpha \xi\right) \left(\omega x^{2} + 1\right) + x\right)^{2}} - \frac{y}{\left(1 + \alpha \xi\right) \left(\omega x^{2} + 1\right) + x} \bigg|_{(x^*,y^*)}.
\end{eqnarray*}

From (\ref{4iscd}), the following equations satisfy at the interior equilibrium $E^* = (x^*,y^*)$.
$$ 1-\frac{x}{\gamma} = \frac{y}{\left(1 + \alpha \xi\right) \left(\omega x^{2} + 1\right) + x} \bigg|_{(x^*,y^*)}.$$ and $$ \frac{\delta \left(x + \xi (\omega x^2 + 1) \right)}{\left(1 + \alpha \xi\right) \left(\omega x^{2} + 1\right) + x} = m + \epsilon y \bigg|_{(x^*,y^*)}.$$

Substituting these two equations in the trace of jacobian, we have

\begin{eqnarray*}
	\text{Tr } J \bigg|_{(x^*,y^*)}  &=& m + \epsilon y - 2 \epsilon y - m + 1 - \frac{2 x}{\gamma} \\
	& & + \left(1 - \frac{x}{\gamma} \right) \left( \frac{x \left(2 \omega x \left(\alpha \xi + 1\right) + 1\right)}{\left(1 + \alpha \xi\right) \left(\omega x^{2} + 1\right) + x} \right)  - 1 + \frac{x}{\gamma} \bigg|_{(x^*,y^*)}. \\
	&=& - \epsilon y - \frac{x}{\gamma} + \left(1 - \frac{x}{\gamma} \right) \left(\frac{x + 2 \omega x^2 \left(\alpha \xi + 1\right)}{\left(1 + \alpha \xi\right) \left(\omega x^{2} + 1\right) + x} \right)  \bigg|_{(x^*,y^*)}. \\
\end{eqnarray*}

Therefore, trace of the jacobian is negative when
\begin{eqnarray*}
	& & - \epsilon y - \frac{x}{\gamma} + \left(1 - \frac{x}{\gamma} \right) \frac{x + 2 \omega x^2 \left(\alpha \xi + 1\right)}{\left(1 + \alpha \xi\right) \left(\omega x^{2} + 1\right) + x}  \bigg|_{(x^*,y^*)}  < 0 \\
	\implies & & \epsilon y > - \frac{x}{\gamma} + \left(1 - \frac{x}{\gamma} \right) \frac{x + 2 \omega x^2 \left(\alpha \xi + 1\right)}{\left(1 + \alpha \xi\right) \left(\omega x^{2} + 1\right) + x} \bigg|_{(x^*,y^*)}  \\
	\implies & & \epsilon y > - \frac{x}{\gamma} + \left(1 - \frac{x}{\gamma} \right) \frac{2 \omega x^2}{\omega x^2 + 1} \bigg|_{(x^*,y^*)}.
\end{eqnarray*}

The trace of the jacobian is negative when 

\begin{equation} \label{4iscdinttrace}
	\epsilon > \frac{2 \gamma \omega {x^{*}}^2 - 3 \omega {x^{*}}^3 - x^*}{\gamma y^* \left(\omega {x^{*}}^2 + 1\right)}.
\end{equation}

The results obtained in this section can be summarized as follows:

\begin{thm}
	The interior equilibrium $(E^* = (x^*,y^*))$ of the system (\ref{4iscd}) exists when it satisfies the conditions in Lemma \ref{4iscdintcond}. The nature of this equilibrium depends on the signs of determinent and trace of the jacobian matrix given in equations (\ref{4iscdintdet}) and (\ref{4iscdinttrace}). The interior equilibrium is 
	\begin{itemize}
		\item asymptotically stable when $	0 < \epsilon < \frac{\delta - m + 2 \omega x^* (\delta \xi - m (1+\alpha \xi))}{2 y^* (2 \omega x^*  (1+\alpha \xi)+1)}$ and $	\epsilon > \frac{2 \gamma \omega {x^{*}}^2 - 3 \omega {x^{*}}^3 - x^*}{\gamma y^* \left(\omega {x^{*}}^2 + 1\right)}$,
		\item an unstable point when $	0 < \epsilon < \frac{\delta - m + 2 \omega x^* (\delta \xi - m (1+\alpha \xi))}{2 y^* (2 \omega x^*  (1+\alpha \xi)+1)}$ and $0 < \epsilon < \frac{2 \gamma \omega {x^{*}}^2 - 3 \omega {x^{*}}^3 - x^*}{\gamma y^* \left(\omega {x^{*}}^2 + 1\right)}$,
		\item a saddle point when  $\epsilon > \frac{\delta - m + 2 \omega x^* (\delta \xi - m (1+\alpha \xi))}{2 y^* (2 \omega x^*  (1+\alpha \xi)+1)}$.
	\end{itemize}
\end{thm}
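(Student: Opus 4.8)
The plan is to read off the stability of $E^*$ from the two invariants of the linearization, namely $\text{Tr}\,J(E^*)$ and $\text{Det}\,J(E^*)$. Since the system is planar, the eigenvalues are the roots of the characteristic equation $\lambda^2 - (\text{Tr}\,J)\lambda + \text{Det}\,J = 0$, and for a degree-two polynomial the Routh--Hurwitz conditions give a complete classification: both eigenvalues have negative real part exactly when $\text{Tr}\,J < 0$ and $\text{Det}\,J > 0$; both have positive real part when $\text{Tr}\,J > 0$ and $\text{Det}\,J > 0$; and the eigenvalues are real of opposite sign (a saddle) precisely when $\text{Det}\,J < 0$, irrespective of the trace. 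Thus the entire theorem reduces to determining the signs of $\text{Det}\,J(E^*)$ and $\text{Tr}\,J(E^*)$ as the competition parameter $\epsilon$ varies.

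First I would take as given the two equilibrium identities that hold at $E^*$,
$$1 - \frac{x^*}{\gamma} = \frac{y^*}{(1+\alpha\xi)(\omega {x^*}^2 + 1)+x^*}, \qquad \frac{\delta\left(x^* + \xi(\omega {x^*}^2+1)\right)}{(1+\alpha\xi)(\omega {x^*}^2+1)+x^*} = m + \epsilon y^*,$$
and substitute them into the raw expressions for $\text{Det}\,J$ and $\text{Tr}\,J$. This is exactly the reduction already performed above, which collapses the rational Jacobian entries into the compact forms displayed immediately before equations (\ref{4iscdintdet}) and (\ref{4iscdinttrace}). The key observation is that each of these simplified expressions is affine in $\epsilon$ with $x^*, y^* > 0$ and a strictly positive denominator, so each sign condition can be solved for $\epsilon$ as a single linear inequality.

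Next I would isolate $\epsilon$ in each condition. Solving $\text{Det}\,J(E^*) > 0$ gives the upper threshold
$$\epsilon_D := \frac{\delta - m + 2\omega x^*\left(\delta\xi - m(1+\alpha\xi)\right)}{2 y^*\left(2\omega x^*(1+\alpha\xi)+1\right)}$$
of equation (\ref{4iscdintdet}), so that $\text{Det}\,J > 0 \iff 0 < \epsilon < \epsilon_D$. Solving $\text{Tr}\,J(E^*) < 0$ gives the lower threshold $\epsilon_T := \frac{2\gamma\omega {x^*}^2 - 3\omega {x^*}^3 - x^*}{\gamma y^*(\omega {x^*}^2+1)}$ of equation (\ref{4iscdinttrace}), so that $\text{Tr}\,J < 0 \iff \epsilon > \epsilon_T$. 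Combining these with the planar classification yields the three cases: $E^*$ is asymptotically stable on $\{\text{Det}>0,\ \text{Tr}<0\}$, i.e. $\epsilon_T < \epsilon < \epsilon_D$; it is an unstable node or focus on $\{\text{Det}>0,\ \text{Tr}>0\}$, i.e. $0 < \epsilon < \epsilon_D$ together with $\epsilon < \epsilon_T$; and it is a saddle whenever $\text{Det} < 0$, i.e. $\epsilon > \epsilon_D$, which is exactly the three bullet points of the statement. Throughout I would invoke Lemma \ref{4iscdintcond} to guarantee $E^*$ lies in the open positive quadrant, so that $x^*, y^* > 0$ and all sign manipulations are legitimate.

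I expect the main obstacle to be the algebra of the two reductions rather than any conceptual difficulty: one must track the sign of the coefficient of $\epsilon$ in the simplified determinant so that the inequality direction (``$\text{Det}>0$ below $\epsilon_D$'') is correct and not inadvertently reversed, and one must confirm that the additional $\frac{\epsilon x^* y^*}{\gamma}$ contribution to the determinant is consistent with the stated threshold (\ref{4iscdintdet}). Once the two affine-in-$\epsilon$ forms are verified, the classification itself is immediate from Routh--Hurwitz.
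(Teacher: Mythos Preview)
Your proposal is correct and follows essentially the same route as the paper: substitute the two equilibrium identities into the raw Jacobian entries to simplify $\text{Det}\,J(E^*)$ and $\text{Tr}\,J(E^*)$, solve each resulting inequality for $\epsilon$ to obtain the thresholds (\ref{4iscdintdet}) and (\ref{4iscdinttrace}), and then read off the three cases from the planar Routh--Hurwitz classification. Your flagged concern about the extra $\frac{\epsilon x^* y^*}{\gamma}$ term in the determinant is well placed---the paper in fact drops this positive contribution when passing to (\ref{4iscdintdet}), so that inequality is only a sufficient condition for $\text{Det}\,J>0$; the same looseness occurs in the trace simplification leading to (\ref{4iscdinttrace}).
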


\section{Bifurcation Analysis} \label{sec:4iscdbifur}

\subsection{Transcritical Bifurcation}
Within this subsection, we derive the conditions for the existence of transcritical bifurcation near the equilibrium point $E_1 = (\gamma,0)$ using the parameter $\xi$ as the bifurcation parameter.

\begin{thm}
	When the parameter satisfies $(1-\alpha) \gamma + \omega \gamma^2 + 1 \neq 0,\ \delta - m \alpha \neq 0$ and $\xi = \xi^* = \frac{m (\omega \gamma^2 + \gamma + 1)-\delta \gamma}{(\delta - m \alpha)(\omega \gamma^2 + 1)}$, a transcritical bifurcation occurs at $E_1 = (\gamma,0)$ in the system (\ref{4iscd}).
\end{thm}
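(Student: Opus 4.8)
The plan is to apply Sotomayor's theorem, the standard criterion for a transcritical bifurcation of a planar vector field. Write the system (\ref{4iscd}) as $\dot{\mathbf{u}} = F(\mathbf{u},\xi)$ with $\mathbf{u}=(x,y)^{T}$ and $F=(f,g)^{T}$. The first task is to confirm that $\xi=\xi^*$ is precisely the value at which $J(E_1)$ acquires a zero eigenvalue. From the computation of $J(E_1)$ in Section~\ref{sec:4iscdstab}, its eigenvalues are $-1$ and
$$\lambda_2(\xi)=\frac{(\delta-m)\gamma+(\delta\xi-m(1+\alpha\xi))(\omega\gamma^2+1)}{(1+\alpha\xi)(\omega\gamma^2+1)+\gamma}.$$
Setting the numerator to zero and solving the resulting linear equation for $\xi$ reproduces exactly $\xi^*=\frac{m(\omega\gamma^2+\gamma+1)-\delta\gamma}{(\delta-m\alpha)(\omega\gamma^2+1)}$; here the hypothesis $\delta-m\alpha\neq 0$ is exactly what guarantees this root exists and is well-defined. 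Thus at $\xi=\xi^*$ the matrix $J(E_1)$ has a simple zero eigenvalue (the other being $-1$), which is the entry point for Sotomayor's theorem.

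Next I would compute the right and left null vectors of $J(E_1)$ at $\xi=\xi^*$. Because this matrix is upper-triangular with diagonal $(-1,0)$, the null structure is transparent: writing $D:=(1+\alpha\xi^*)(\omega\gamma^2+1)+\gamma$ for the common denominator, a right null vector is $v=(-\gamma,\,D)^{T}$ (from $-v_1-\tfrac{\gamma}{D}v_2=0$), and a left null vector is $w=(0,1)^{T}$. The appearance of $w=(0,1)^{T}$ is the feature that makes the remaining verifications tractable: every Sotomayor quantity reduces to a derivative of the $g$-component alone, and since $g$ carries an overall factor of $y$ which vanishes at $E_1=(\gamma,0)$, many terms drop out automatically.

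I would then verify the three Sotomayor conditions in turn. The first, $w^{T}F_\xi(E_1,\xi^*)=0$, holds immediately: $w^{T}F_\xi=\partial g/\partial\xi$, and since $g$ is proportional to $y$ this vanishes at $y=0$. The transversality condition $w^{T}\big[DF_\xi(E_1,\xi^*)\,v\big]\neq 0$ reduces, after the $y$-factor kills the $x$-derivative contribution, to $\big(\partial^2 g/\partial y\,\partial\xi\big)\big|_{E_1}\cdot v_2$; a direct differentiation of the predator growth term gives $\big(\partial^2 g/\partial y\,\partial\xi\big)\big|_{E_1}=\delta(\omega\gamma^2+1)\big[(\omega\gamma^2+1)+(1-\alpha)\gamma\big]/D^{2}$, so the condition becomes $\delta(\omega\gamma^2+1)\big[(\omega\gamma^2+1)+(1-\alpha)\gamma\big]/D\neq 0$, which holds precisely under the stated hypothesis $(1-\alpha)\gamma+\omega\gamma^2+1\neq 0$. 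The non-degeneracy condition $w^{T}\big[D^2F(E_1,\xi^*)(v,v)\big]\neq 0$ reduces, using $\partial^2 g/\partial x^2\big|_{y=0}=0$, $\partial^2 g/\partial x\,\partial y=G_x$ and $\partial^2 g/\partial y^2=-2\epsilon$, to $-2D\big[\gamma\,G_x(\gamma,\xi^*)+\epsilon D\big]$, where $G_x=\partial_x(\partial g/\partial y)$ can be read off from the Jacobian entry $\partial g/\partial x$ in Section~\ref{sec:4iscdstab}; this is nonzero for generic parameter values.

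The main obstacle is the careful bookkeeping in the second-order term of the non-degeneracy condition: one must correctly assemble the mixed and pure second partials of $g$ contracted against the pair $(v,v)$ and verify the resulting quadratic form does not vanish. The first two conditions are essentially forced by the $w=(0,1)^{T}$ structure, so the real content — and the origin of the two stated nondegeneracy hypotheses — lies in the transversality computation (which produces $(1-\alpha)\gamma+\omega\gamma^2+1\neq 0$) and in keeping the algebra of $D^2F(v,v)$ under control.
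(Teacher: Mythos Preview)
Your proposal is correct and follows essentially the same route as the paper: both apply Sotomayor's theorem at $E_1=(\gamma,0)$ with bifurcation parameter $\xi$, using the upper-triangular structure of $J(E_1)$ to identify the right null vector (your $v=(-\gamma,D)^T$ is a scalar multiple of the paper's $V=(1,-D/\gamma)^T$) and the left null vector $w=(0,1)^T$, and then verify the three Sotomayor conditions. Your write-up is in fact slightly more explicit than the paper's in deriving $\xi^*$ from the zero-eigenvalue condition and in explaining why $w=(0,1)^T$ forces all relevant quantities to involve only the $g$-component.
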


\begin{proof}
	The jacobian matrix corresponding to equilibrium point $E_1=(\gamma,0)$ is given by
	
	\begin{equation*}
		J(E_1) = 
		\begin{bmatrix}
			-1  & \frac{-\gamma}{\gamma + (1 + \alpha \xi) (\omega \gamma^2 + 1)} \\\
			0 & \frac{(\delta - m) \gamma + (\delta \xi - m (1+\alpha \xi))(\omega \gamma^2 + 1)}{\gamma + (1 + \alpha \xi) (\omega \gamma^2 + 1)}
		\end{bmatrix}.
	\end{equation*}
	The eigenvectors corresponding to the zero eigenvalues of $J(E_1)$ and $J(E_1)^{T}$ be denoted by $V$ and $W$, respectively. 
	
	\begin{equation*}
		V = \begin{bmatrix} V_1  \\ V_2 \end{bmatrix} = \begin{bmatrix}	1  \\ - \frac{\gamma + (1 + \alpha \xi) (\omega \gamma^2 + 1)}{\gamma} \end{bmatrix} , \ \ W = \begin{bmatrix} 0 \\ 1 \end{bmatrix}.
	\end{equation*}
	Note that $V_2 < 0$. Let us denote system (\ref{4iscd}) as $H = \begin{bmatrix} F  \\ G \end{bmatrix}.$
	Thus, 
	
	\begin{eqnarray*}
		H_{\xi} (E_1; \xi^*) &=&  \begin{bmatrix} 0 \\ 0 \end{bmatrix}, \\
		DH_\xi(E_1; \xi^*)V &=& 
		\begin{bmatrix}
			\frac{\partial F_\xi}{\partial x} & \frac{\partial F_\xi}{\partial y} \\
			\frac{\partial G_\xi}{\partial x} & \frac{\partial G_\xi}{\partial y}
		\end{bmatrix}
		\begin{bmatrix}
			V_1 \\
			V_2
		\end{bmatrix}
		_{(E_1; \xi^*)} \\
		&=& \frac{-\alpha (\omega \gamma^2 + 1)}{\gamma + (\omega \gamma^2 + 1)(1+\alpha \xi)}
		\begin{bmatrix}
			1 \\
			\frac{\delta}{\alpha \gamma} \left((1-\alpha)\gamma + \omega \gamma^2 + 1\right)
		\end{bmatrix}, \\ 
		D^2 H(E_1; \xi^*)(V, V) &=&
		\begin{bmatrix}
			\frac{\partial^2 F}{\partial x^2} V_1^2 + 2 \frac{\partial^2 F}{\partial x \partial y} V_1 V_2 + \frac{\partial^2 F}{\partial y^2} V_2^2 \\
			\frac{\partial^2 G}{\partial x^2} V_1^2 + 2 \frac{\partial^2 G}{\partial x \partial y} V_1 V_2 + \frac{\partial^2 G}{\partial y^2} V_2^2
		\end{bmatrix}
		_{(E_1, \xi^*)} \\
		&=& \begin{bmatrix}
			-\frac{2}{\gamma} - \frac{2 (1 - \omega \gamma^2) (1 + \alpha \xi)}{(\gamma + (1 + \alpha \xi)(\omega \gamma^2 + 1))^2} \\
			- \frac{2 \delta (1 + \alpha \xi - \xi) ( 1 - \omega \gamma^2)}{\gamma (\gamma + (1 + \alpha \xi) (\omega \gamma^2 + 1))}- \frac{2 \epsilon}{\gamma^2} \left( \gamma + (1 + \alpha \xi) (\omega \gamma^2 + 1)\right)^2
		\end{bmatrix}.
	\end{eqnarray*}
	
	If $1+\omega \gamma^2 + (1-\alpha)\gamma \neq 0$, we have
	
	\begin{eqnarray*}
		W^{T} H_{\xi} (E_1; \xi^*)&=& 0, \\
		W^{T} [DH_\xi(E_1; \xi^*)V]&=& \frac{-\delta (\omega \gamma^2 + 1) (1+\omega \gamma^2 +(1-\alpha)\gamma)}{\gamma (\gamma + (1 + \alpha \xi) (1+\omega \gamma^2))} \neq 0, \\
		W^{T} [D^2 H(E_1; \xi^*)(V, V) ]&=& - \frac{2 \delta (1 + \alpha \xi - \xi) ( 1 - \omega \gamma^2)}{\gamma (\gamma + (1 + \alpha \xi) (\omega \gamma^2 + 1))}- \frac{2 \epsilon}{\gamma^2} \left( \gamma + (1 + \alpha \xi) (\omega \gamma^2 + 1)\right)^2 \\ & & \neq 0.
	\end{eqnarray*}
	By the Sotomayor's theorem \cite{perko2013differential}, the system (\ref{4iscd}) undergoes a transcritical bifurcation around $E_1$ at $\xi = \xi^*$.
\end{proof}

In \autoref{trans14iscd}, the nature of interior equilibrium shifts from saddle to stable and the the trivial equilibrium $E_0$ changes its nature around the same point from saddle to unstable node. This results in the transcritical bifurcation at the value $\xi =2.9$. \autoref{trans24iscd} represents the transcritical bifurcation where the stability of interior equilibrium ($E^* = (x^*,y^*)$) and the axial eqilibrium ($E_1 = (\gamma,0)$) are exchanged at the bifurcation point $\xi = 2.8$.  We depict the equilibria and their stability for the following set of parameter values, $\gamma = 1.0,\ \alpha = 1.0,\ \epsilon = 0.5,\ \delta = 8.0,\ m=6.0, \ \omega = 4.0$. The two subplots in \autoref{trans14iscd} - \autoref{saddle4iscd} represents the bifurcation diagrams for the parameter $\xi$ with respect to the prey and predator populations respectively. 

\begin{figure}[ht]
	\centering
	\includegraphics[width=\textwidth]{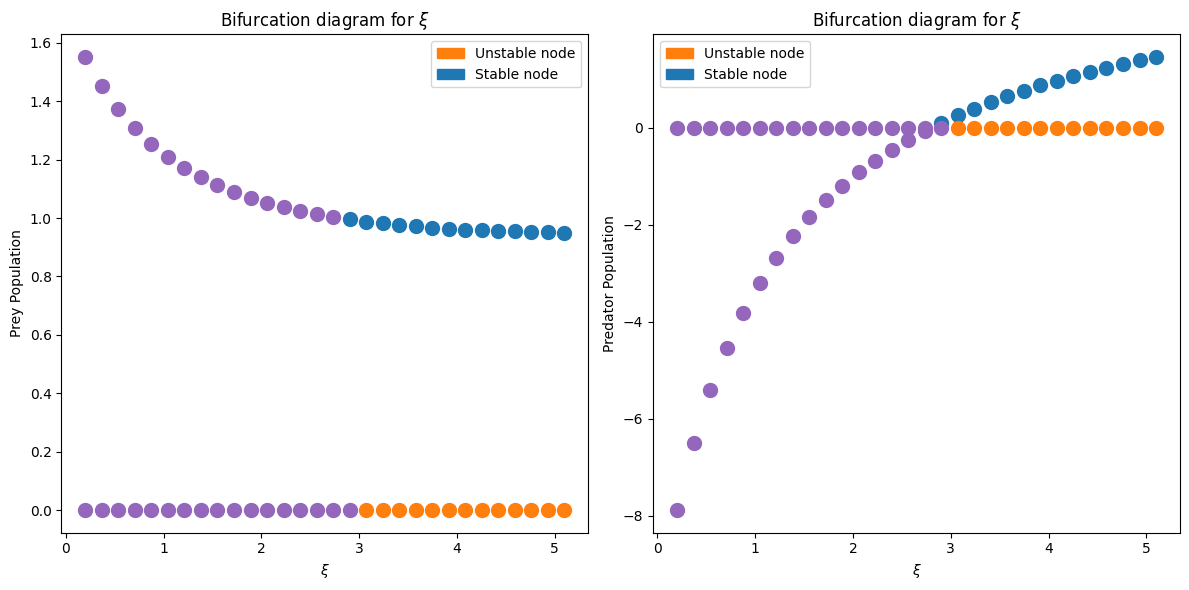}
	\caption{Transcritical bifurcation diagram around trivial equilibrium $E_0 = (0,0)$ with respect to the quantity of additional food $\xi$.}
	\label{trans14iscd}
\end{figure}

\begin{figure}[ht]
	\centering
	\includegraphics[width=\textwidth]{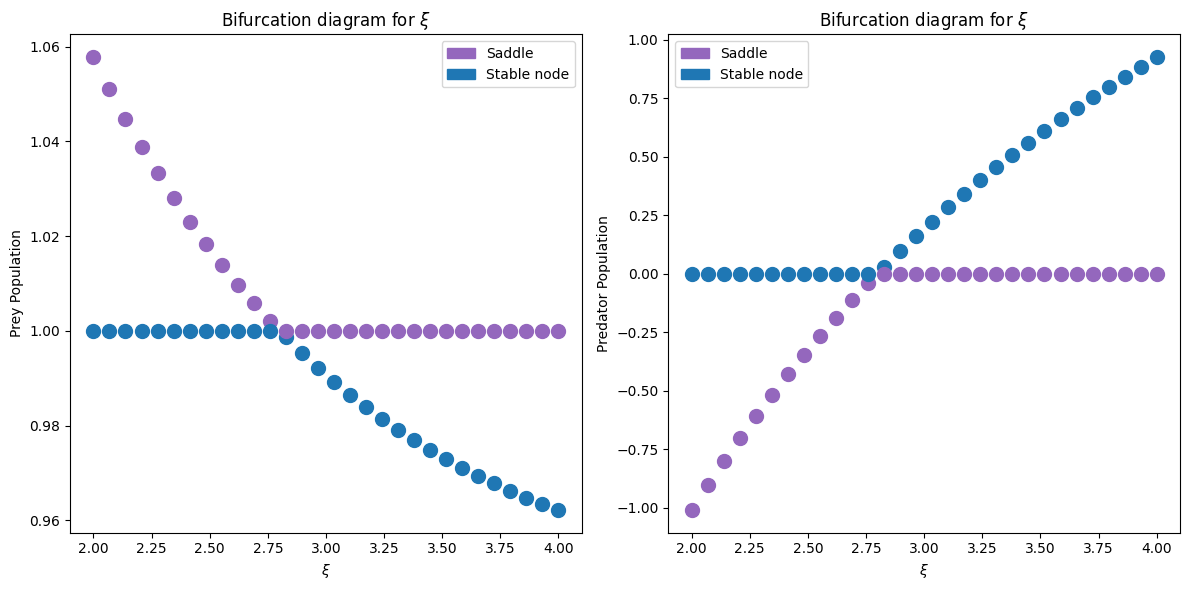}
	\caption{Transcritical bifurcation diagram around axial equilibrium $E_1 = (\gamma,0)$ with respect to the quantity of additional food $\xi$.}
	\label{trans24iscd}
\end{figure}

\subsection{Saddle-node Bifurcation}
Within this subsection, we derive the conditions for the existence of saddle-node bifurcation near the equilibrium point $E_2 = \left( 0, \frac{\delta \xi - m (1+\alpha \xi)}{\epsilon (1 + \alpha \xi)} \right)$ using the parameter $\xi$ as the bifurcation parameter.

\begin{thm}
	When the parameter satisfies $\delta \neq m \alpha,\ \epsilon (1+\alpha \xi)^2 \neq \delta \xi - m (1 + \alpha \xi)$ and $\xi = \xi^* = \frac{m}{\delta - m \alpha}$, a saddle-node bifurcation occurs at $E_2 = \left( 0, \frac{\delta \xi - m (1+\alpha \xi)}{\epsilon (1 + \alpha \xi)} \right)$ in the system (\ref{4iscd}).
\end{thm}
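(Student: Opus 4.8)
The plan is to invoke Sotomayor's theorem in its saddle-node form, mirroring the transcritical argument at $E_1$ given above but verifying a different triple of conditions. First I would set $\xi=\xi^*=\frac{m}{\delta-m\alpha}$ in the Jacobian $J(E_2)$ computed in Section~\ref{sec:4iscdstab}. The point of this choice is that $\xi^*$ is exactly the root of $\delta\xi-m(1+\alpha\xi)=0$, so at $\xi^*$ both the bottom-left and bottom-right entries of $J(E_2)$ vanish and the matrix reduces to $\mathrm{diag}(1,0)$. The standing hypothesis $\epsilon(1+\alpha\xi)^2\neq\delta\xi-m(1+\alpha\xi)$ keeps the $(1,1)$ entry away from zero, so $J(E_2;\xi^*)$ has a simple zero eigenvalue, which is the entry point for Sotomayor's test.

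Next I would record the right and left null eigenvectors. Because the zero sits in the $(2,2)$ slot of a diagonal matrix, both $V$ and $W$ are $(0,1)^{T}$, so each contraction $W^{T}(\cdot)$ simply extracts the second component of the vector it is applied to. Writing the right-hand side of (\ref{4iscd}) as $H=\begin{bmatrix}F\\G\end{bmatrix}$ exactly as in the previous proof, I would then form the two scalars that Sotomayor's saddle-node criterion requires: the transversality scalar $W^{T}H_{\xi}(E_2;\xi^*)$ and the non-degeneracy scalar $W^{T}\big[D^2H(E_2;\xi^*)(V,V)\big]$.

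The non-degeneracy scalar is routine: with $V=(0,1)^{T}$ the quadratic form collapses to $\partial^2 G/\partial y^2=-2\epsilon$, giving $W^{T}[D^2H(V,V)]=-2\epsilon\neq 0$ whenever intra-specific competition is present. The hard part, and the step I expect to be the real obstacle, is the transversality scalar $W^{T}H_{\xi}$. The difficulty is structural: at $\xi=\xi^*$ the $y$-coordinate of $E_2$ collapses to zero, so the bifurcation point is literally the origin, and both $F_{\xi}$ and $G_{\xi}$ carry an overall factor of $y$. One must therefore evaluate $\partial G/\partial\xi$ with great care at $(0,0;\xi^*)$ and decide whether it genuinely survives. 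If it does, Sotomayor's saddle-node conditions close the argument; if instead it vanishes, the correct contingency is to test the transcritical criterion by evaluating $W^{T}[DH_{\xi}V]=\partial G_{\xi}/\partial y$ at the same point. Thus the whole theorem turns on a single scalar derivative, and the delicate issue is separating a genuine fold (non-vanishing $W^{T}H_{\xi}$) from a mere exchange of stability between the colliding equilibria $E_2$ and $E_0$.
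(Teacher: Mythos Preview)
Your overall route---apply Sotomayor at $E_2$ with $\xi$ as bifurcation parameter---is exactly what the paper does, but you have been more careful and have correctly located the real difficulty. At $\xi=\xi^*$ the relation $\delta\xi-m(1+\alpha\xi)=0$ collapses $E_2$ to the origin, the Jacobian $J(E_2;\xi^*)$ is $\mathrm{diag}(1,0)$, and the correct left null eigenvector is indeed $W=(0,1)^T$. The paper instead records $W=\bigl(1,\,\tfrac{\delta\xi-m(1+\alpha\xi)-\epsilon(1+\alpha\xi)^2}{\delta(\delta\xi-m(1+\alpha\xi))(1+\alpha\xi-\xi)}\bigr)^T$, a vector that fails the second row of $J^TW=0$ and is singular at $\xi^*$; the paper's ``nonzero'' value of $W^TH_\xi$ then arises only by algebraically cancelling this zero denominator against the matching zero in $G_\xi(E_2)=\tfrac{\delta(\delta\xi-m(1+\alpha\xi))}{\epsilon(1+\alpha\xi)^3}$ before substituting $\xi^*$, which is illegitimate.

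Your suspicion about the transversality scalar is therefore confirmed: with the correct $W$ one gets $W^TH_\xi(E_2;\xi^*)=G_\xi(0,0;\xi^*)=0$, since $G_\xi$ carries an overall factor of $y$. The saddle-node criterion genuinely fails, and your contingency is the right move: $W^T[DH_\xi V]=G_{\xi y}(0,0;\xi^*)=\delta/(1+\alpha\xi^*)^2\neq 0$ together with $W^T[D^2H(V,V)]=-2\epsilon\neq 0$ verifies a \emph{transcritical} bifurcation, consistent with the paper's own numerical description of $E_0$ and $E_2$ colliding and exchanging stability rather than annihilating. In short, your proposal has no gap; it simply cannot close on a saddle-node because the theorem as stated mislabels the bifurcation type.
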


\begin{proof}
	The jacobian matrix corresponding to equilibrium point $E_2 = \left( 0, \frac{\delta \xi - m (1+\alpha \xi)}{\epsilon (1+\alpha \xi)} \right)$ is given by
	
	\begin{equation*}
		J(E_2) =
		\begin{bmatrix}
			1 - \frac{\delta \xi - m (1 + \alpha \xi)}{\epsilon (1 + \alpha \xi)^2} & 0 \\
			\frac{\delta (\delta \xi - m (1+\alpha \xi)) (1 + \alpha \xi - \xi)}{\epsilon (1 + \alpha \xi)^3} & - \frac{\delta \xi - m (1 + \alpha \xi)}{1 + \alpha \xi}
		\end{bmatrix}.
	\end{equation*}
	
	The eigenvectors corresponding to the zero eigenvalues of $J(E_2)$ and $J(E_2)^{T}$ be denoted by $V$ and $W$, respectively. 
	
	\begin{equation*}
		V = \begin{bmatrix} V_1  \\ V_2 \end{bmatrix} = \begin{bmatrix}	0  \\ 1 \end{bmatrix}, \ \ W = \begin{bmatrix} 1 \\ \frac{\delta \xi - m(1+\alpha \xi) - \epsilon (1 + \alpha \xi)^2}{\delta (\delta \xi - m (1 + \alpha \xi))(1+\alpha \xi - \xi)} \end{bmatrix}.
	\end{equation*}
	
	Let us denote system (\ref{4iscd}) as $H = \left[ \begin{matrix} F  \\ G \end{matrix} \right].$
	Thus, 
	
	\begin{eqnarray*}
		H_{\xi} (E_2; \xi^*) &=&  \begin{bmatrix} 0 \\ \frac{\delta (\delta \xi - m (1+\alpha\xi))}{\epsilon (1 + \alpha \xi)^3} \end{bmatrix}, \\
		DH_\xi(E_2; \xi^*)V &=& 
		\begin{bmatrix}
			\frac{\partial F_\xi}{\partial x} & \frac{\partial F_\xi}{\partial y} \\
			\frac{\partial G_\xi}{\partial x} & \frac{\partial G_\xi}{\partial y}
		\end{bmatrix}
		\begin{bmatrix}
			V_1 \\
			V_2
		\end{bmatrix}
		_{(E_2; \xi^*)} =
		\begin{bmatrix}	0 \\
			\frac{\delta}{(1 + \alpha \xi)^2}
		\end{bmatrix}, \\
		D^2 H(E_2; \xi^*)(V, V) &=&
		\begin{bmatrix}
			\frac{\partial^2 F}{\partial x^2} V_1^2 + 2 \frac{\partial^2 F}{\partial x \partial y} V_1 V_2 + \frac{\partial^2 F}{\partial y^2} V_2^2 \\
			\frac{\partial^2 G}{\partial x^2} V_1^2 + 2 \frac{\partial^2 G}{\partial x \partial y} V_1 V_2 + \frac{\partial^2 G}{\partial y^2} V_2^2
		\end{bmatrix}_{(E_2, \xi^*)} = \begin{bmatrix} 0 \\ - 2 \epsilon	\end{bmatrix}.
	\end{eqnarray*}
	
	If $\delta \neq m \alpha$, we have
	
	\begin{eqnarray*}
		W^{T} H_{\xi} (E_2; \xi^*) &=&  \frac{\delta \xi - m(1+\alpha \xi) - \epsilon (1 + \alpha \xi)^2}{\epsilon (1 +\alpha \xi)^3 (1+\alpha \xi - \xi)}  \neq 0, \\
		W^{T} [DH_\xi(E_2; \xi^*)V] &=& \frac{\delta \xi - m(1+\alpha \xi) - \epsilon (1 + \alpha \xi)^2}{(\delta \xi - m (1 + \alpha \xi))(1+\alpha \xi - \xi) (1+\alpha \xi)^2} \neq 0, \\
		W^{T} [D^2 H(E_2; \xi^*)(V, V) ] &=& - 2 \epsilon \left(\frac{\delta \xi - m(1+\alpha \xi) - \epsilon (1 + \alpha \xi)^2}{\delta (\delta \xi - m (1 + \alpha \xi))(1+\alpha \xi - \xi)}\right) \neq 0.
	\end{eqnarray*}
	By the Sotomayor's theorem \cite{perko2013differential}, the system (\ref{4iscd}) undergoes a saddle-node bifurcation around $E_2$ at $\xi = \xi^*$.
\end{proof}

In \autoref{saddle4iscd}, the saddle-node bifurcation around the another axial equilibrium $E_2$ with respect to $\xi$ is discussed. For this same set of parameter values, both the equilibria $E_0$ and $E_2$ exist and move towards each other as $\xi$ reduces. Further at $\xi = 3.0$, both ($E_0$ and $E_2$) collide and become $E_0$ which ensures the happening of saddle-node bifurcation. When $\xi < 3.0$, then there does not exist any prey-free equilibrium $E_2$. We depict the equilibria and their stability for the following set of parameter values, $\gamma = 1.0,\ \alpha = 1.0,\ \epsilon = 0.5,\ \delta = 8.0,\ m=6.0, \ \omega = 4.0$. 

\begin{figure}[ht]
	\centering
	\includegraphics[width=\textwidth]{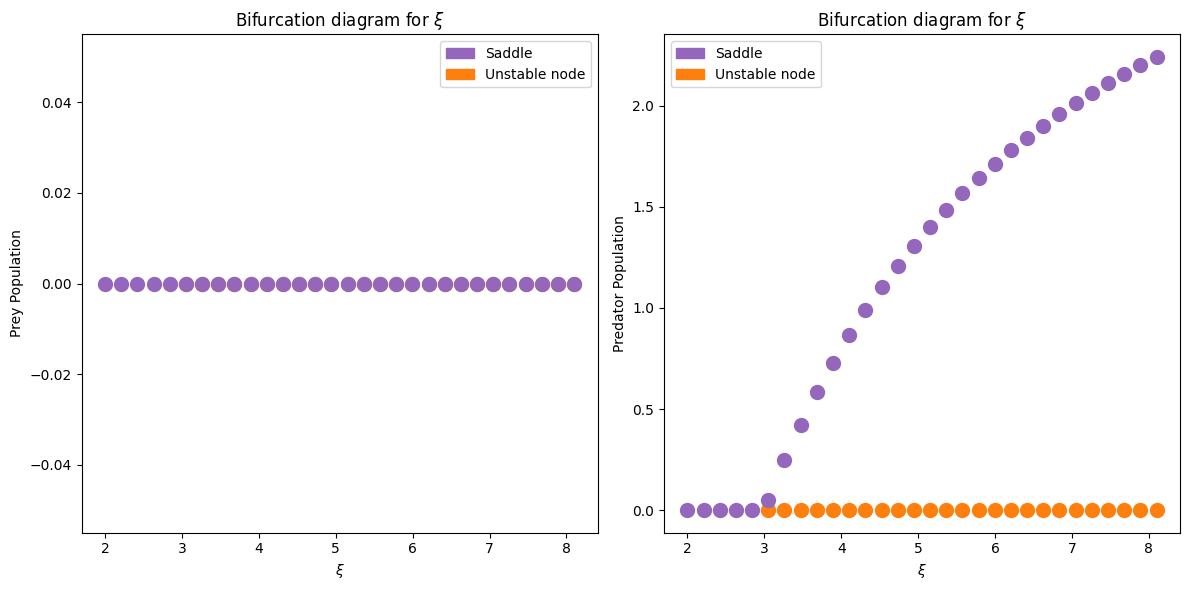}
	\caption{Saddle-node bifurcation diagram around axial equilibrium $E_2 = \left(0,\frac{\delta \xi - m (1 + \alpha \xi)}{\epsilon (1+\alpha \xi)}\right)$ with respect to the quantity of additional food $\xi$.}
	\label{saddle4iscd}
\end{figure}

\subsection{Hopf Bifurcation}

Within this subsection, we derive the conditions for the existence of Hopf bifurcation near the equilibrium point $E^* = \left( x^*, y^* \right)$ using the parameter $\xi$ as the bifurcation parameter.

\begin{thm}
	Let the following conditions be satisfied by the parameters of system (\ref{4iscd}), 
	\begin{itemize}
		\item $\epsilon = \epsilon^* = \frac{1}{y} \left(1 - \frac{x}{\gamma}\right) \left(\frac{x + 2 \omega x^2 (1 + \alpha \xi)}{ x + (\omega x^2 + 1)(1+\alpha \xi)}\right) - \frac{x}{\gamma y} \bigg|_{*},$
		\item $0 < \epsilon < \frac{\delta - m + 2 \omega x^* (\delta \xi - m (1+\alpha \xi))}{2 y^* (2 \omega x^*  (1+\alpha \xi)+1)},$
		\item $x^* \neq \frac{1}{\sqrt{\omega}},$
	\end{itemize}
	then the system (\ref{4iscd}) experiences Hopf bifurcation with respect to intra-specific competition $\epsilon$ about the interior equilibrium point $E^* = (x^*,y^*)$. Here $\epsilon^*$ is the critical intra-specific competition at which the Hopf bifurcation occurs.
\end{thm}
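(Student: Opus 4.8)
The plan is to verify the hypotheses of the planar Poincar\'e--Andronov--Hopf bifurcation theorem at $\epsilon = \epsilon^*$, treating $\epsilon$ as the bifurcation parameter. For a two-dimensional system the eigenvalues of $J(E^*)$ are the roots of $\lambda^2 - (\text{Tr}\,J)\lambda + \det J = 0$, so they form a purely imaginary pair $\pm i\omega_0$ with $\omega_0>0$ exactly when $\text{Tr}\,J = 0$ and $\det J > 0$. Thus I would first establish this non-hyperbolicity condition at $\epsilon^*$, then verify the transversality (eigenvalue-crossing) condition, and finally comment on nondegeneracy.

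First I would check $\text{Tr}\,J(E^*) = 0$ at $\epsilon = \epsilon^*$. This is immediate: the first listed condition simply sets $\epsilon^*$ equal to the value of $\epsilon$ that annihilates the trace expression already derived in Section~\ref{sec:4iscdstab}, so substituting $\epsilon = \epsilon^*$ into $\text{Tr}\,J|_{(x^*,y^*)}$ gives zero by construction. Next I would check $\det J(E^*) > 0$: the second condition is precisely the inequality (\ref{4iscdintdet}) guaranteeing positivity of the determinant, so $\det J > 0$ holds and the eigenvalues are $\lambda_{1,2} = \pm i\sqrt{\det J}$. The third hypothesis $x^* \neq 1/\sqrt{\omega}$ enters here to keep this pair genuinely imaginary: when $x^* = 1/\sqrt{\omega}$ the entry $\partial_x g$ vanishes (it carries the factor $1-\omega x^{*2}$), the Jacobian becomes triangular, and with $\text{Tr}\,J=0$ the determinant is forced to $-(\epsilon y^*)^2 \le 0$, precluding an imaginary pair; ruling this out keeps $\omega_0 = \sqrt{\det J}>0$.

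The substantive step, which is \emph{not} among the three listed hypotheses, is the transversality condition $\frac{d}{d\epsilon}(\Re\lambda)\big|_{\epsilon=\epsilon^*} \neq 0$. Since for complex roots $\Re\lambda = \tfrac12\,\text{Tr}\,J$, this reduces to $\frac{d}{d\epsilon}\text{Tr}\,J\big|_{\epsilon=\epsilon^*}\neq 0$. Writing $\text{Tr}\,J|_{*} = -\epsilon y^* + \psi(x^*)$, where $\psi$ collects the prey-equation contributions, the explicit dependence on $\epsilon$ contributes the term $-y^* < 0$, which is nonzero since $y^*>0$. The main obstacle is that $x^*$ and $y^*$ themselves depend on $\epsilon$ through the equilibrium relation (\ref{4iscdystar}) and the quintic (\ref{4iscdxstar}), so the total derivative also contains $-\epsilon\,\frac{dy^*}{d\epsilon} + \psi'(x^*)\frac{dx^*}{d\epsilon}$. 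These implicit contributions have to be obtained by differentiating the equilibrium equations via the implicit function theorem and then shown not to cancel the dominant $-y^*$ term. I would argue that generically this cancellation does not occur, confirming it numerically for the parameter sets used in the figures, so that the real part crosses zero with nonzero speed.

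Finally, with $\text{Tr}\,J = 0$, $\det J > 0$, and transversality established, the planar Hopf theorem yields a one-parameter family of periodic orbits bifurcating from $E^*$ at $\epsilon = \epsilon^*$. Determining whether the bifurcation is super- or subcritical would require computing the first Lyapunov coefficient through the standard normal-form reduction (or reading its sign off the numerical continuation), which I regard as a separate refinement beyond the existence statement claimed here.
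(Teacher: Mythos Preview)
Your approach follows the same standard Hopf verification as the paper: show $\text{Tr}\,J(E^*)=0$ at $\epsilon=\epsilon^*$, show $\det J(E^*)>0$, and check transversality. The first two steps match the paper's proof exactly.

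The divergence is in how you read the third hypothesis $x^*\neq 1/\sqrt{\omega}$. The paper uses it \emph{as} the transversality condition: its proof simply asserts $\frac{d}{d\xi}\bigl(\text{Tr}\,J(E^*)\bigr)\big|_{\xi^*}\neq 0 \Longleftrightarrow x^*\neq 1/\sqrt{\omega}$ (the appearance of $\xi$ rather than $\epsilon$ here looks like a slip in the paper) and then invokes the implicit function theorem to conclude. You instead interpret $x^*\neq 1/\sqrt{\omega}$ as ruling out the triangular-Jacobian degeneracy. Your computation there is correct, but it makes condition~3 redundant: as you yourself show, $x^*=1/\sqrt{\omega}$ together with $\text{Tr}\,J=0$ forces $\det J=-(\epsilon y^*)^2\le 0$, which is already excluded by condition~2. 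So under your reading the third hypothesis does no work, whereas in the paper's reading it is precisely what delivers the eigenvalue-crossing.

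Your treatment of transversality is in fact more careful than the paper's: you isolate the explicit $-y^*<0$ contribution to $\frac{d}{d\epsilon}\text{Tr}\,J$ and correctly flag the implicit dependence of $(x^*,y^*)$ on $\epsilon$ through the equilibrium relations, a subtlety the paper does not address at all. In summary, you verify the same three ingredients but assign hypothesis~3 a different (and weaker) role, while giving a more honest account of the transversality step that the paper treats as automatic.
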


\begin{proof}
	The characteristic equation of Jacobian matrix $J_{E^*}$ is given by 
	
	\begin{equation}
		\lambda^2 - \text{Tr}(J_{E^*}) \lambda + \text{Det}(J_{E^*}) = 0,
	\end{equation}
	where the expression of $\text{Tr}(J_{E^*})$ and $\text{Det}(J_{E^*})$ are given by 
	
	From (\ref{4iscdinttrace}), it is obvious that $\text{Tr}(J_{E^*}) = 0$ when 
	\begin{equation} 
		\begin{split}
			\epsilon = \epsilon^* = \frac{1}{y} \left(1 - \frac{x}{\gamma}\right) \left(\frac{x + 2 \omega x^2 (1 + \alpha \xi)}{ x + (\omega x^2 + 1)(1+\alpha \xi)}\right) - \frac{x}{\gamma y} \bigg|_{*}. 
		\end{split}
	\end{equation}
	
	Since both eigenvalues at Hopf bifurcation point are purely imaginary numbers, we have Det$(J_{E^*}) > 0$. From \autoref{4iscdintdet}, the determinent is positive when $$0 < \epsilon < \frac{\delta - m + 2 \omega x^* (\delta \xi - m (1+\alpha \xi))}{2 y^* (2 \omega x^*  (1+\alpha \xi)+1)}.$$
	
	We also have $\frac{d\left(\text{Tr}(J_{E^*}) \right)}{d \xi} \bigg|_{\xi^*} \neq 0 \implies x^* \neq \frac{1}{\sqrt{\omega}}.$
	
	Therefore, when these conditions are satisfied, the Implicit theorem guarentees the occurrence of Hopf bifurcation at $E^* (x^*,y^*)$ i.e., small amplitude periodic solutions bifurcate from $E^* (x^*,y^*)$ through a Hopf bifurcation.
	
\end{proof}

The existence of Hopf bifurcation with respect to $\epsilon$ is depicted in \autoref{hopf4iscd}. Stable limit cycle is observed around the interior equilibrium when $\epsilon = 0.024$ and the limit cycle disappears when $\epsilon$ is increased to $0.03$. The remaining parameter values are as follows: $\gamma = 15.0,\ \alpha = 0.1,\ \xi = 0.45,\ \delta = 0.45,\ m=0.28, \ \omega = 0.01$.

\begin{figure}[ht]
	\centering
	\includegraphics[width=\textwidth]{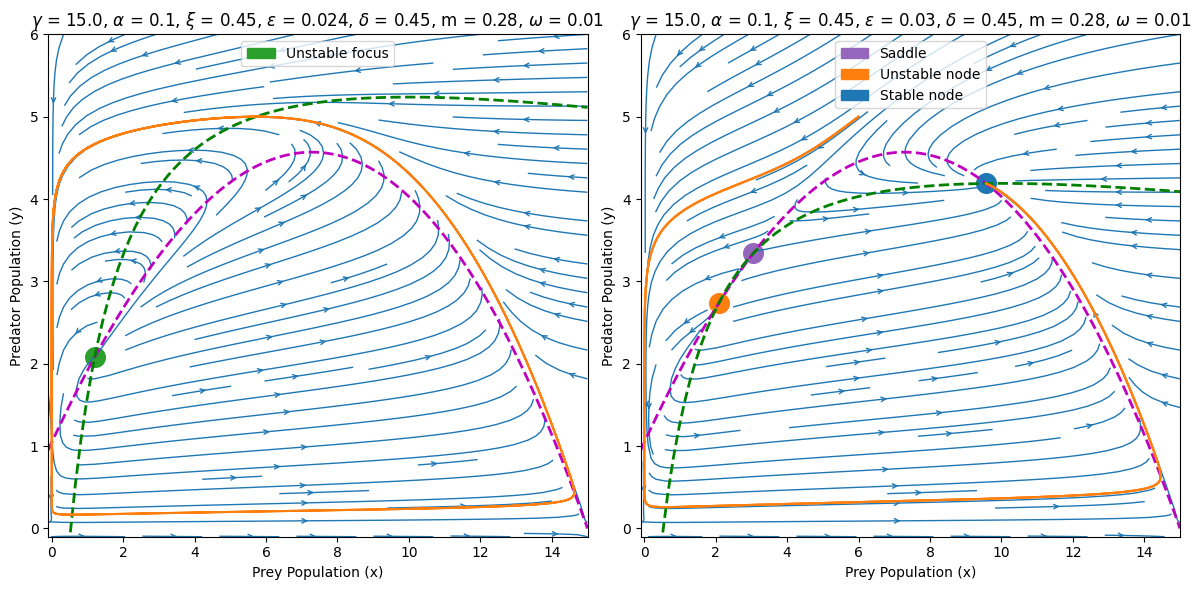}
	\caption{Supercritical Hopf bifurcation diagram with respect to the intra-specific competition $\epsilon$.}
	\label{hopf4iscd}
\end{figure}

\subsection{Cusp bifurcation}

A cusp bifurcation is a codimension-2 bifurcation that occurs when two control parameters interact to create a qualitative change in the system's stability behavior. This bifurcation is observed in \autoref{cusp14iscd} - \autoref{cusp24iscd} for the system (\ref{4iscd}). Here, the horizontal axis corresponds to the quantity (or quality) of additional food provided, and the vertical axis represents the strength of intra-specific competition. Within this parameter space, distinct regions of bistability and monostability emerge, with the cusp bifurcation marking the critical transition point.

The cusp bifurcation point is a unique threshold where the system shifts from exhibiting bistable behavior to monostability. In the bistable region, the system can stabilize at two distinct equilibrium states depending on initial conditions. As parameters vary and cross the cusp point, one of these stable equilibria loses stability, resulting in a monostable regime with a single stable equilibrium. The remaining parameters are as follows: $\gamma = 0.6545,\ \alpha= 0.1,\ \xi = 1.0,\ \delta = 0.6,\ m=0.2,\ \omega = 0.1$.

\begin{figure}[ht]
	\centering
	\includegraphics[width=0.6\textwidth]{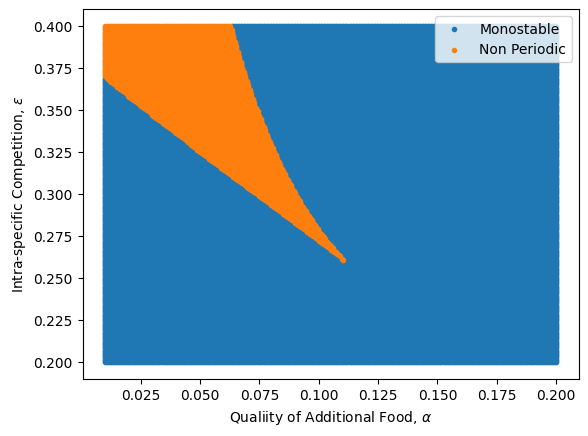}
	\caption{Cusp bifurcation with respect to the qualiity of additional food ($\alpha$) and intra-specific competition ($\epsilon$).}
	\label{cusp14iscd}
\end{figure}

\begin{figure}[ht]
	\centering
	\includegraphics[width=0.6\textwidth]{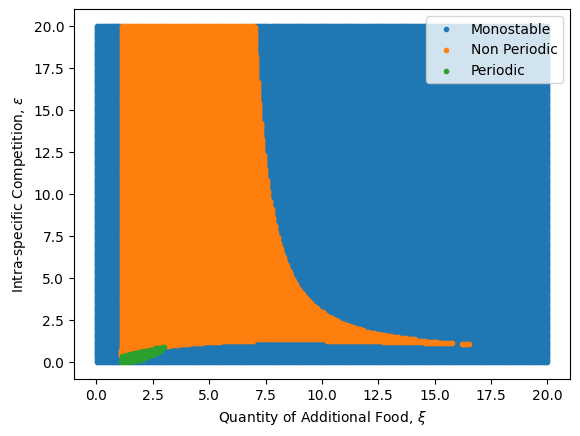}
	\caption{Cusp bifurcation with respect to the quantity of additional food ($\xi$) and intra-specific competition ($\epsilon$).}
	\label{cusp24iscd}
\end{figure}

\section{Global Dynamics}
\label{sec:4iscdglobaldynamics}

In this section, we study the global dynamics of the system (\ref{4iscd}) in the $\alpha$-$\xi$ parameter space. For this study, we divide the parameter space of the system (\ref{4iscd}) in the absence of additional food into three regions. These regions are divided based on the qualitative behaviors of the interior equilibria of the system. They are divided into three regions, namely, $R_1,\ R_2,\ R_3$ corresponding to the space where there is no interior equilibria, unstable interior equilibria and the stable interior equilibria respectively. \autoref{initial4iscd} depicts the phase portrait of the initial system in all three regions. 

\begin{figure}[!ht]
	\centering
	\includegraphics[width=\linewidth]{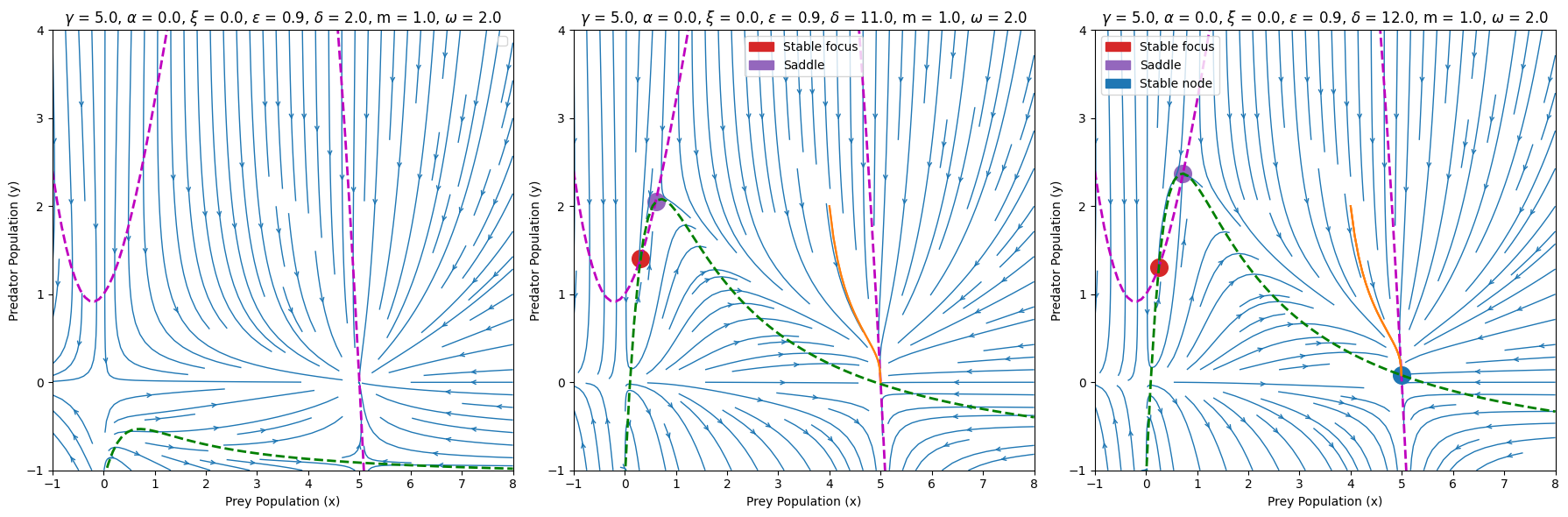}  
	\caption{Dynamics of the system (\ref{4iscd}) in the absence of additional food.}
	\label{initial4iscd}
\end{figure}

Now, in each of these regions, we study the influence of additional food by dividing the $\alpha - \xi$ parameter space into regions based on the following curves. Each of these curves divide the space into two regions based on the qualitative nature of the equilibrium point. 

\begin{equation*}
	\begin{split}
		\text{Bifurcation Curve for } E_0: &\ \phi_1(\alpha,\xi) : \  \delta \xi - m (1 + \alpha \xi) = 0. \\
		\text{Bifurcation Curve for } E_1: &\ \phi_2(\alpha,\xi) : \ \delta \xi - m (1 + \alpha \xi) + \frac{(\delta - m) \gamma}{\omega \gamma^2 + 1} = 0. \\
		\text{Bifurcation Curve for } E_2: &\ \phi_3(\alpha,\xi) : \ \delta \xi - m (1 + \alpha \xi) - \epsilon (1 + \alpha \xi)^2 = 0. \\
		\text{Existence Curve for } E^*: &\ \phi_4(\alpha,\xi) : \ \delta \xi - m (1 + \alpha \xi) + \frac{(\delta - m)}{2 \sqrt{\omega}} = 0. \\			
	\end{split}
\end{equation*} 

\begin{figure}[!ht]
	\centering
	\includegraphics[width=0.7\linewidth]{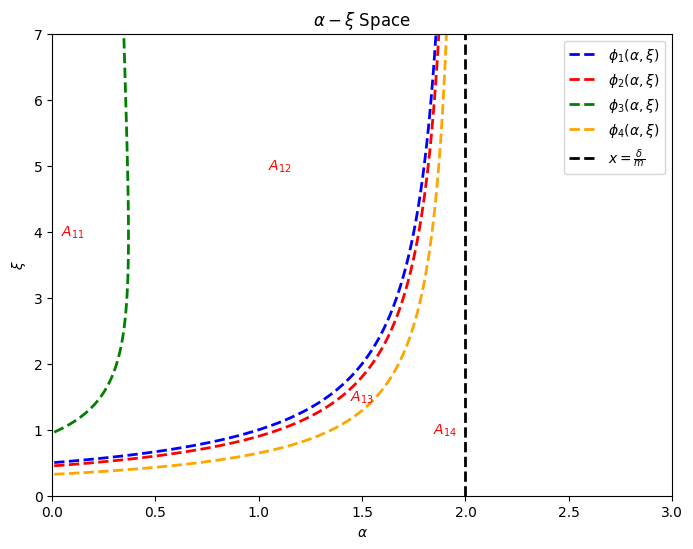} \\
	\caption{Influence of additional food on the system (\ref{4iscd}) when the parameters belong to the region $R_1$.}
	\label{r14iscd}
\end{figure}

\autoref{r14iscd} divides the $\alpha - \xi$ space into $4$ regions. In this region, there is no interior equilibrium in the absence of additional food. As additional food is provided, the prey-free equilibrium $E_2$ is stable in the region $A_{11}$. Also, the predator-free equilibrium $E_1$ is stable in regions $A_{13}$ and $A_{14}$. Provision of additional food leads to the emergence of interior equilibria $E^*$ in the regions $A_{11},\ A_{12},\ A_{13}$. However, provision of additional food in the region $A_{14}$ takes us to the original qualitative behavior as in the case of absence of additional food.

\begin{figure}[!ht]
	\centering
	\includegraphics[width=0.7\linewidth]{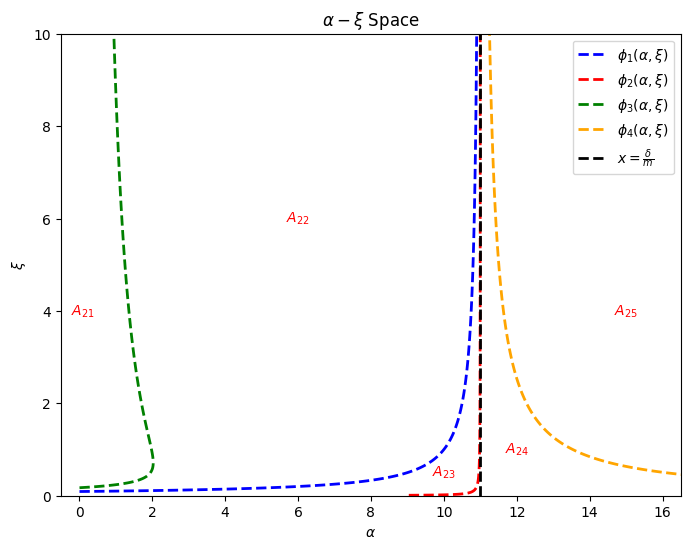} \\
	\caption{Influence of additional food on the system (\ref{4iscd}) when the parameters belong to the region $R_2$.}
	\label{r24iscd}
\end{figure}

\autoref{r24iscd} divides the $\alpha - \xi$ space into $5$ regions. In this region, there is unstable interior equilibrium in the absence of additional food. As additional food is provided, the prey-free equilibrium $E_2$ is stable in the region $A_{21}$. Also, the predator-free equilibrium $E_1$ is stable in regions $A_{24}$ and $A_{25}$. As additional food is provided, the system (\ref{4iscd}) continues to exhibit interior equilibria $E^*$ in the regions $A_{21},\ A_{22},\ A_{23}$ and $A_{24}$. However, provision of additional food in the region $A_{25}$ leads to the disappearance of interior equilibria.

\begin{figure}[!ht]
	\centering
	\includegraphics[width=0.7\linewidth]{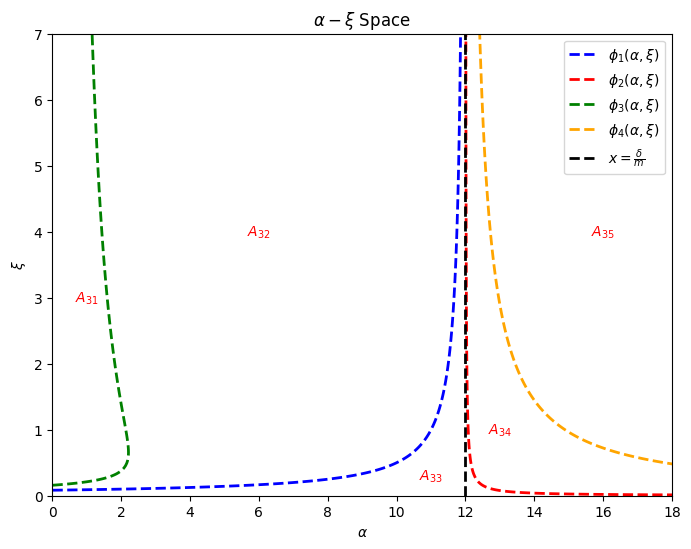} \\
	\caption{Influence of additional food on the system (\ref{4iscd}) when the parameters belong to the region $R_3$.}
	\label{r34iscd}
\end{figure}

\autoref{r34iscd} divides the $\alpha - \xi$ space into $5$ regions. In this region, there is a stable interior equilibrium in the absence of additional food. As additional food is provided, the prey-free equilibrium $E_2$ is stable in the region $A_{31}$. Also, the predator-free equilibrium $E_1$ is stable in regions $A_{34}$ and $A_{35}$. As additional food is provided, the system (\ref{4iscd}) continues to exhibit interior equilibria $E^*$ in the regions $A_{31},\ A_{32},\ A_{33}$ and $A_{34}$. However, provision of additional food in the region $A_{35}$ leads to the disappearance of interior equilibria.

\section{Consequences of providing Additional Food}
\label{sec:4iscdconseq}

\indent In this section, we present the consequences of providing additional food by studying the possibility of existence in three different scenarios: Pest eradication, pest dominance and the coexistence of pest and natural enemies. 

Irrespective of the initial behaviour of the system in the absence of additional food, the provision of additional food in the regions $A_{11},\ A_{21}, \ A_{31}$ leads to a stable pest-free equilibrium $E_2$. Therefore, provision of additional food in these regions can lead to pest eradication. However, the interior equilibria, which represents the coexistence of pest and natural enemies, also exists in these regions. Depending on the nature of interior equilibria, there is a possibility of the bistability in these regions. Therefore, pest eradication in $A_{11},\ A_{21}, \ A_{31}$ depends on the initial condition of pest and natural enemies. 

Now consider the provision of additional food in the regions $A_{14},\ A_{25}$ and $A_{35}$ where there is no interior equilibria. In all these regions, only predator-free equilibrium $E_1$ is stable. Therefore, pest domination is observed in these regions with high quantity of additional food. This could happen possibly due to the abundant availability of additional food which is diverting natural enemies from pests. Hence, arbitrary provision of additional food, in spite of being non-reproductive, does not lead to pest eradication. It can also lead to the completely opposite drastic scenarios for the ecosystem. 

We now discuss the third case where the pest eradication is not possible. However, keeping pest at levels below which they can damage the crop is the next best desirable alternative. Neither of the axial equilibria are stable in the regions $A_{12},\ A_{22},\ A_{23},\ A_{32}$ and $A_{33}$. Therefore, only interior equilibria are stable in these regions leading to the co-existence of pest and natural enemies. 

Lastly, in the regions $A_{13},\ A_{24}$ and $A_{34}$, the interior equilibria exists and the natural enemy-free equilibrium is stable. If interior equilibrium is stable in these regions, then we have bistability in these regions. In these cases, interior equilibrium (coexistence of pest and natural enemies) is desirable to the natural enemy-free equilibrium. Therefore, initial population of pest and natural enemies play a crucial role in the success of bio-control strategies in these regions. 

\autoref{equiplot4iscd} numerically depicts the stability nature of various equilibria that the system exhibits only when additional food and competition terms are altered. This shows the importance of these two terms in the dynamics of the system (\ref{4iscd}). Each frame depicts the existence of $0-3$ interior equilibrium.

\begin{figure}[!ht]
	\includegraphics[width=0.8\textwidth]{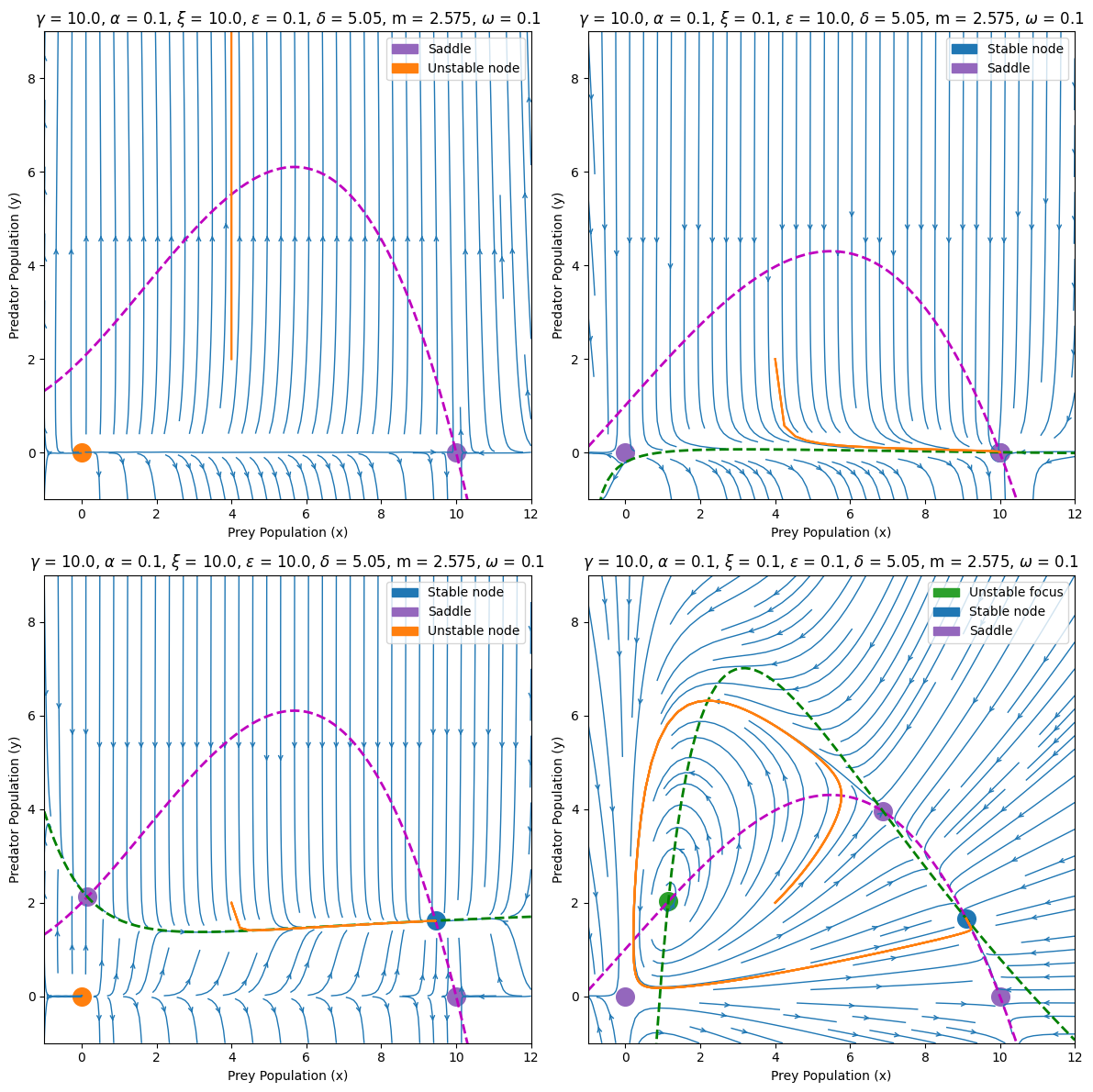}
	\caption{The stability nature of various equilibria of the system (\ref{4iscd}).}
	\label{equiplot4iscd}
\end{figure}

\section{Time-Optimal Control Studies for Holling type-IV Systems} \label{sec:4iscdtimecontrol}

In this section, we formulate and characterise two time-optimal control problems with quality of additional food and quantity of additional food as control parameters respectively. We shall drive the system (\ref{4iscd}) from the initial state $(x_0,y_0)$ to the final state $(\bar{x},\bar{y})$ in minimum time.

\subsection{Quality of Additional Food as Control Parameter}

We assume that the quantity of additional food $(\xi)$ is constant and the quality of additional food varies in $[\alpha_{\text{min}},\alpha_{\text{max}}]$. The time-optimal control problem with additional food provided prey-predator system involving Holling type-IV functional response and intra-specific competition among predators (\ref{4iscd}) with quality of additional food ($\alpha$) as control parameter is given by

\begin{equation}
	\begin{rcases}
		& \displaystyle {\bf{\min_{\alpha_{\min} \leq \alpha(t) \leq \alpha_{\max}} T}} \\
		& \text{subject to:} \\
		& \frac{\mathrm{d} x}{\mathrm{d} t} = x \left(1-\frac{x}{\gamma} \right)- \frac{xy}{(1+\alpha \xi)(\omega x^2 + 1) + x}, \\
		& \frac{\mathrm{d} y}{\mathrm{d} t} = \delta \left( \frac{x + \xi (\omega x^2 + 1)}{(1+\alpha \xi)(\omega x^2 + 1) + x} \right) y - m y - \epsilon y^2, \\
		& (x(0),y(0)) = (x_0,y_0) \ \text{and} \ (x(T),y(T)) = (\bar{x},\bar{y}).
	\end{rcases}
	\label{4iscdalpha0}
\end{equation}

This problem can be solved using a transformation on the independent variable $t$ by introducing an independent variable $s$ such that $\mathrm{d}t = ((1+\alpha \xi)(\omega x^2 + 1) + x) \mathrm{d}s$. This transformation converts the time-optimal control problem ($\ref{4iscdalpha0}$) into the following linear problem.

\begin{equation}
	\begin{rcases}
		& \displaystyle {\bf{\min_{\alpha_{\min} \leq \alpha(t) \leq \alpha_{\max}} S}} \\
		& \text{subject to:} \\
		& \dot{x}(s) = x \left(1 - \frac{x}{\gamma}\right) ((1+\alpha \xi)(\omega x^2 + 1) + x) - x y, \\
		& \dot{y}(s) = \delta (x + \xi (\omega x^2 + 1)) y - ((1+\alpha \xi)(\omega x^2 + 1) + x) (m y + \epsilon y^2), \\
		& (x(0),y(0)) = (x_0,y_0) \ \text{and} \ (x(S),y(S)) = (\bar{x},\bar{y}).
	\end{rcases}
	\label{4iscdalpha}
\end{equation}

Hamiltonian function for this problem (\ref{4iscdalpha}) is given by

\begin{eqnarray*}
	\mathbb{H}(s,x,y,\alpha, p,q) &=& p \left(x \left(1 - \frac{x}{\gamma}\right) ((1+\alpha \xi)(\omega x^2 + 1) + x) - x y\right) \\
	& &+ q \left(\delta (x + \xi (\omega x^2 + 1)) y - ((1+\alpha \xi)(\omega x^2 + 1) + x) (m y + \epsilon y^2)\right) \\
	&=& \left[ p x \left(1 - \frac{x}{\gamma}\right) - q y (m + \epsilon y) \right] (1 + \omega x^2) \xi \alpha \\ 
	& & + p x \left(\left(1 - \frac{x}{\gamma}\right) (x + 1 + \omega x^2) - y\right) \\
	& & + q y \left( \delta (x + (\omega x^2 + 1) \xi) - (x + 1 + \omega x^2) (m + \epsilon y)\right).
\end{eqnarray*}

Here, $p$ and $q$ are costate variables satisfying the adjoint equations 

\begin{equation*}
	\begin{split}
		\dot{p} = & -p \left[x \left(2 - \frac{3x}{\gamma}\right) + (1+\alpha \xi)\left( 1- \frac{2 x}{\gamma} + 3 \omega x^2 - \frac{4\omega x^3}{\gamma}  \right) - y \right] \\
		&  -  q y \left[ \delta - m -\epsilon y + 2\omega x \left(\delta \xi - (m +\epsilon y) (1+\alpha \xi) \right) \right], \\
		\dot{q} = & p x - q \left[ (\delta - m - 2 \epsilon y) x + (\omega x^2 + 1)\left(\delta \xi - (m+2 \epsilon y) (1+\alpha \xi)\right) \right].
	\end{split}
\end{equation*}

Since Hamiltonian is a linear function in $\alpha$, the optimal control can be a combination of bang-bang and singular controls. Since we are minimizing the Hamiltonian, the optimal strategy is given by 

\begin{equation}
	\alpha^*(t) =
	\begin{cases}
		\alpha_{\max}, &\text{ if } \frac{\partial \mathbb{H}}{\partial \alpha} < 0. \\
		\alpha_{\min}, &\text{ if } \frac{\partial \mathbb{H}}{\partial \alpha} > 0.
	\end{cases}
\end{equation}
where
\begin{equation}
	\frac{\partial \mathbb{H}}{\partial \alpha} = \left[ p x \left(1 - \frac{x}{\gamma}\right) - q y (m + \epsilon y) \right] (1 + \omega x^2) \xi .
\end{equation}

This problem (\ref{4iscdalpha}) admits a singular solution if there exists an interval $[s_1,s_2]$ on which $\frac{\partial \mathbb{H}}{\partial \alpha} = 0$. Therefore, 

\begin{equation}
	\frac{\partial \mathbb{H}}{\partial \alpha} = p x \left(1 - \frac{x}{\gamma}\right) - q y (m + \epsilon y) = 0 \textit{ i.e. } \frac{p}{q} = \frac{y (m + \epsilon y)}{x \left(1 -\frac{x}{\gamma}\right)}. \label{4iscdapbyq1}
\end{equation}

Differentiating $\frac{\partial \mathbb{H}}{\partial \alpha}$ with respect to $s$ we obtain 

\begin{equation*}
	\begin{split}
		\frac{\mathrm{d}}{\mathrm{d}s} \frac{\partial \mathbb{H}}{\partial \alpha} = & \frac{\mathrm{d}}{\mathrm{d}s} \left[ \left[ p x \left(1 - \frac{x}{\gamma}\right) - q y (m + \epsilon y) \right] (1 + \omega x^2) \xi \right] \\
		= & \left[ \dot{p} x \left(1 - \frac{x}{\gamma}\right) - \dot{q} y (m + \epsilon y) - q (m + 2 \epsilon y) \dot{y} \right] (1 + \omega x^2) \xi \\
		& + \left[ p \left(1-\frac{2 x}{\gamma} + 3 \omega x^2 - \frac{4 \omega x^3}{\gamma}\right) - 2 \omega q x y (m+\epsilon y) \right] \dot{x}.
	\end{split}
\end{equation*}

Substituting the values of $\dot{x}, \dot{y}, \dot{p}, \dot{q}$ in the above equation and simplifying, we obtain
\begin{equation*}
	\begin{split}
		& \frac{\mathrm{d}}{\mathrm{d}s} \frac{\partial \mathbb{H}}{\partial \alpha} = p x \left[ \frac{xy}{\gamma} - x \left(1 - \frac{x}{\gamma}\right)^2 (1+2\omega x (1+\alpha \xi)) - (m+\epsilon y) y \right] \\
		& - q y \left[ x \left(1 - \frac{x}{\gamma}\right) \left(\delta - m - \epsilon y + 2 \omega x (\delta \xi - (m+\epsilon y) (1+\alpha \xi))\right) + \delta \epsilon y (x + \xi (\omega x^2 + 1))\right].
	\end{split}
\end{equation*}

Along the singular arc, $\frac{\mathrm{d}}{\mathrm{d}s} \frac{\partial \mathbb{H}}{\partial \alpha} = 0$. This implies that 

\begin{equation*}
	\begin{split}
		& p x \left[ \frac{xy}{\gamma} - x \left(1 - \frac{x}{\gamma}\right)^2 (1+2\omega x (1+\alpha \xi)) - (m+\epsilon y) y \right] \\
		= & q y \left[ x \left(1 - \frac{x}{\gamma}\right) \left(\delta - m - \epsilon y + 2 \omega x (\delta \xi - (m+\epsilon y) (1+\alpha \xi))\right) + \delta \epsilon y (x + \xi (\omega x^2 + 1))\right].
	\end{split}
\end{equation*}

and that 
\begin{equation} \label{4iscdapbyq2}
	\frac{p}{q} = \frac{y}{x} \ \frac{x \left(1 - \frac{x}{\gamma}\right) \left(\delta - m - \epsilon y + 2 \omega x (\delta \xi - (m+\epsilon y) (1+\alpha \xi))\right) + \delta \epsilon y (x + \xi (\omega x^2 + 1))}{\frac{xy}{\gamma} - x \left(1 - \frac{x}{\gamma}\right)^2 (1+2\omega x (1+\alpha \xi)) - (m+\epsilon y) y}.
\end{equation}

The solutions of the system of equations (\ref{4iscdapbyq1}) and (\ref{4iscdapbyq2}) gives the switching points of the bang-bang control.

\subsection{Quantity of Additional Food as Control Parameter}

We assume that the quality of additional food $(\alpha)$ is constant and the quantity of additional food varies in $[\xi_{\text{min}},\xi_{\text{max}}]$. The time-optimal control problem with additional food provided prey-predator system involving Holling type-IV functional response and intra-specific competition among predators (\ref{4iscd}) with quantity of additional food ($\xi$) as control parameter is given by

\begin{equation}
	\begin{rcases}
		& \displaystyle {\bf{\min_{\xi_{\min} \leq \xi(t) \leq \xi_{\max}} T}} \\
		& \text{subject to:} \\
		& \frac{\mathrm{d} x}{\mathrm{d} t} = x \left(1-\frac{x}{\gamma} \right)- \frac{xy}{(1+\alpha \xi)(\omega x^2 + 1) + x}, \\
		& \frac{\mathrm{d} y}{\mathrm{d} t} = \delta \left( \frac{x + \xi (\omega x^2 + 1)}{(1+\alpha \xi)(\omega x^2 + 1) + x} \right) y - m y - \epsilon y^2, \\
		& (x(0),y(0)) = (x_0,y_0) \ \text{and} \ (x(T),y(T)) = (\bar{x},\bar{y}).
	\end{rcases}
	\label{4iscdxi0}
\end{equation}

This problem can be solved using a transformation on the independent variable $t$ by introducing an independent variable $s$ such that $\mathrm{d}t = ((1 + \alpha \xi)(\omega x^2 + 1) + x) \mathrm{d}s$. This transformation converts the time-optimal control problem (\ref{4iscdxi0}) into the following linear problem.

\begin{equation}
	\begin{rcases}
		& \displaystyle {\bf{\min_{\xi_{\min} \leq \xi(t) \leq \xi_{\max}} S}} \\
		& \text{subject to:} \\
		& \dot{x}(s) = x \left(1 - \frac{x}{\gamma}\right) ((1 + \alpha \xi)(\omega x^2 + 1) + x) - x y, \\
		& \dot{y}(s) = \delta (x + \xi (\omega x^2 + 1)) y - ((1 + \alpha \xi)(\omega x^2 + 1) + x) (m y + \epsilon y^2), \\
		& (x(0),y(0)) = (x_0,y_0) \ \text{and} \ (x(S),y(S)) = (\bar{x},\bar{y}).
	\end{rcases}
	\label{4iscdxi}
\end{equation}

Hamiltonian function for this problem (\ref{4iscdxi}) is given by
\begin{eqnarray*}
	\mathbb{H}(s,x,y,\xi, p,q) &=& p \left(x \left(1 - \frac{x}{\gamma}\right) ((1 + \alpha \xi)(\omega x^2 + 1) + x) - x y\right) \\
	& &+ q \left(\delta (x + \xi (\omega x^2 + 1)) y - ((1 + \alpha \xi)(\omega x^2 + 1) + x) (m y + \epsilon y^2)\right) \\
	&=& \left[\alpha p x \left(1 - \frac{x}{\gamma}\right) + q \delta y - \alpha q y (m + \epsilon y) \right] (\omega x^2+1) \xi \\ 
	& & + p x \left(\left(1 - \frac{x}{\gamma}\right) (x + 1 + \omega x^2) - y\right) \\
	& & + q y \left( \delta x - (x + 1 + \omega x^2) (m + \epsilon y)\right).
\end{eqnarray*}

Here, $p$ and $q$ are costate variables satisfying the adjoint equations 

\begin{equation*}
	\begin{split}
		\dot{p} = & -p \left[x \left(2 - \frac{3x}{\gamma}\right) + (1+\alpha \xi)\left( 1- \frac{2 x}{\gamma} + 3 \omega x^2 - \frac{4\omega x^3}{\gamma}  \right) - y \right] \\
		&  -  q y \left[ \delta - m -\epsilon y + 2\omega x \left(\delta \xi - (m +\epsilon y) (1+\alpha \xi) \right) \right], \\
		\dot{q} = & p x - q \left[ (\delta - m - 2 \epsilon y) x + (\omega x^2 + 1)\left(\delta \xi - (m+2 \epsilon y) (1+\alpha \xi)\right) \right].
	\end{split}
\end{equation*}

Since Hamiltonian is a linear function in $\xi$, the optimal control can be a combination of bang-bang and singular controls. Since we are minimizing the Hamiltonian, the optimal strategy is given by 

\begin{equation}
	\xi^*(t) =
	\begin{cases}
		\xi_{\max}, &\text{ if } \frac{\partial \mathbb{H}}{\partial \xi} < 0. \\
		\xi_{\min}, &\text{ if } \frac{\partial \mathbb{H}}{\partial \xi} > 0.
	\end{cases}
\end{equation}
where
\begin{equation}
	\frac{\partial \mathbb{H}}{\partial \xi} = \left[\alpha p x \left(1 - \frac{x}{\gamma}\right) + q \delta y - \alpha q y (m + \epsilon y) \right] (\omega x^2+1).
\end{equation}

This problem (\ref{4iscdxi}) admits a singular solution if there exists an interval $[s_1,s_2]$ on which $\frac{\partial \mathbb{H}}{\partial \xi} = 0$. Therefore, 

\begin{equation}
	\frac{\partial \mathbb{H}}{\partial \xi} = \alpha p x \left(1 - \frac{x}{\gamma}\right) + \delta q y - \alpha q y (m + \epsilon y) = 0 \textit{ i.e. } \frac{p}{q} = \frac{\alpha y (m+\epsilon y)-\delta y}{\alpha x \left(1 - \frac{x}{\gamma}\right)}. \label{4iscdxpbyq1}
\end{equation}

Differentiating $\frac{\partial \mathbb{H}}{\partial \xi}$ with respect to $s$ we obtain 

\begin{equation*}
	\begin{split}
		\frac{\mathrm{d}}{\mathrm{d}s} \frac{\partial \mathbb{H}}{\partial \xi} =& \frac{\mathrm{d}}{\mathrm{d}s} \left[ \alpha p x \left(1 - \frac{x}{\gamma}\right) + \delta q y - \alpha q y (m + \epsilon y) \right] \\
		=&\alpha p \left(1 - \frac{2x}{\gamma} \right)\dot{x} + \alpha x \left(1 - \frac{x}{\gamma} \right) \dot{p} + \left(\delta - \alpha (m+\epsilon y) \right) y \dot{q} + \left( \delta - \alpha  (m + 2 \epsilon y) \right) q \dot{y}.
	\end{split}
\end{equation*}

Substituting the values of $\dot{x}, \dot{y}, \dot{p}, \dot{q}$ in the above equation and simplifying, we obtain

\begin{equation*}
	\begin{split}
		\frac{\mathrm{d}}{\mathrm{d}s} \frac{\partial \mathbb{H}}{\partial \xi} = & p x \left[ \frac{\alpha xy}{\gamma} - \alpha x (1+2 \omega x (1+\alpha \xi)) \left(1 - \frac{x}{\gamma}\right)^2 + y (\delta - \alpha (m+\epsilon y))\right] \\
		& + \delta \epsilon q y^2 \left((1-\alpha) x + 1 + \omega x^2 \right).
	\end{split}
\end{equation*}

Along the singular arc, $\frac{\mathrm{d}}{\mathrm{d}s} \frac{\partial \mathbb{H}}{\partial \xi} = 0$. This implies that 

\begin{equation*}
	\begin{split}
		p x \left[ \frac{\alpha xy}{\gamma} - \alpha x (1+2 \omega x (1+\alpha \xi)) \left(1 - \frac{x}{\gamma}\right)^2 + y (\delta - \alpha (m+\epsilon y))\right] & \\
		+ \delta \epsilon q y^2 \left((1-\alpha) x + 1 + \omega x^2 \right) & = 0.
	\end{split}
\end{equation*}

and that 
\begin{equation} \label{4iscdxpbyq2}
	\frac{p}{q} = \frac{y^2}{x} \  \ \frac{\delta \epsilon \left((1-\alpha) x + 1 + \omega x^2 \right)}{\frac{\alpha xy}{\gamma} - \alpha x (1+2 \omega x (1+\alpha \xi)) \left(1 - \frac{x}{\gamma}\right)^2 + y (\delta - \alpha (m+\epsilon y))}.
\end{equation}

The solutions of the system of equations (\ref{4iscdxpbyq1}) and (\ref{4iscdxpbyq2}) gives the switching points of the bang-bang control.

\subsection{Applications to Pest Management}

In this subsection, we simulated the time-optimal control problems (\ref{4iscdalpha}) and (\ref{4iscdxi}) using CasADi in python \cite{CasADi}. We implemented the direct transcription method with multiple shooting in order to solve the time-optimal control problems. In this method, we discretize the control problem into smaller intervals using finite difference integration, specifically the fourth-order Runge-Kutta (RK4) method. By breaking the trajectory into multiple shooting intervals, the state and control variables at each node are treated as optimization variables. The dynamics of the system are enforced as constraints between nodes, allowing for greater flexibility and improved convergence when solving the nonlinear programming problem with CasADi’s solvers.

\begin{figure}[!ht]
	\centering
	\includegraphics[width=\textwidth]{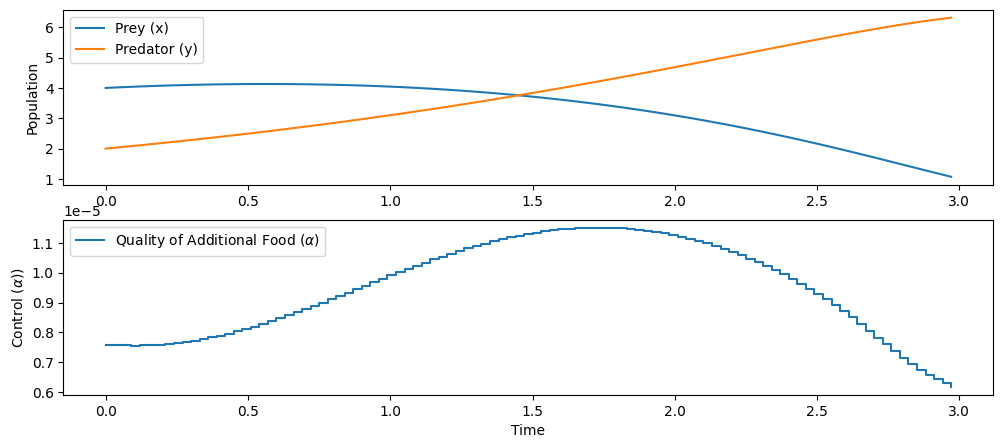}
	\caption{The optimal state trajectories and the optimal control trajectories for the time optimal control problem (\ref{4iscdxi}).}
	\label{c4iscdalpha}
\end{figure}

\begin{figure}[!ht]
	\centering
	\includegraphics[width=\textwidth]{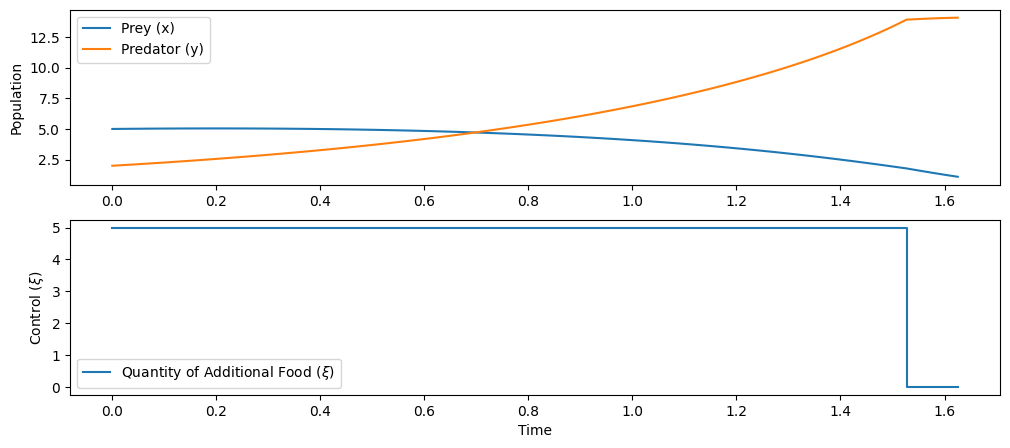}
	\caption{The optimal state trajectories and the optimal control trajectories for the time optimal control problem (\ref{4iscdxi}).}
	\label{c4iscdxi}
\end{figure}

\autoref{c4iscdalpha} illustrates the optimal state and control trajectories for the time-optimal control problem (\ref{4iscdalpha}). The simulation uses the parameter values $\gamma = 8.0,\  \xi = 0.1,\ \delta = 0.96, \ m=0.3,\ \epsilon = 0.01,\ \omega = 0.01$, starting from the initial point $(4,2)$ and reaching the final point $(1,3)$ in an optimal time of $2.97$ units. 

\autoref{c4iscdxi} illustrates the optimal state and control trajectories for the time-optimal control problem (\ref{4iscdxi}). The simulation uses the parameter values $\gamma = 8.0,\  \alpha = 0.1,\ \delta = 0.96, \ m=0.3,\ \epsilon = 0.01,\ \omega = 0.01$, starting from the initial point $(5,2)$ and reaching the final point $(1,4)$ in an optimal time of $1.62$ units. 

\section{Discussions and Conclusions} \label{sec:disc}

\indent This paper studies deterministic prey-predator systems exhibiting Holling type-IV functional responses among the additional food provided predators that exhibit intra-specific competition. To begin with, we proved the positivity and boundedness of solutions. As the model could exhibit atmost $5$ real roots, we proved the condition for the existence of interior equilibria in \autoref{4iscdintcond}. The conditions for stability of various equilibria is presented in \autoref{sec:4iscdstab}. From the qualitative theory of dynamical systems, we observed that the system (\ref{4iscd}) exhibits the trans-critical, saddle, Hopf and two cusp bifurcations. In addition to this, we presented a detailed study on the global dynamics and consequences of providing additional food. Further, we formulated the time-optimal control problems with the objective to minimize the final time in which the system reaches the pre-defined state. Here, we considered the quality and the quantity of additional food as control variables. Using the Pontraygin maximum principle, we characterized the optimal control values. We also numerically simulated the theoretical findings and applied them in the context of pest management.

Some of the salient features of this work include the following. This work captures the commonly observed intra-specific competition among predators for the additional food provided prey-predator system exhibiting Holling type-IV functional responses. A rigorous analysis on the dynamics of this system is presented in this work. In addition to this, this paper also dealt with the novel study of the time-optimal control problems by transforming the independent variable in the control system. This work has been an initial attempt dealing with the time optimal control studies for prey-predator systems involving intra-specific competition among predators. This initial exploratory research will further lead to a more sophisticated model and rigorous analysis in the context of sensitivity and estimation of parameters, controllability and observability in the future works.

\subsection*{Financial Support: }
This research was supported by National Board of Higher Mathematics(NBHM), Government of India(GoI) under project grant - {\bf{Time Optimal Control and Bifurcation Analysis of Coupled Nonlinear Dynamical Systems with Applications to Pest Management, \\ Sanction number: (02011/11/2021NBHM(R.P)/R$\&$D II/10074).}}

\subsection*{Conflict of Interests Statement: }
The authors have no conflicts of interest to disclose.

\subsection*{Ethics Statement:} 
This research did not required ethical approval.

\subsection*{Acknowledgments}
The authors dedicate this paper to the founder chancellor of SSSIHL, Bhagawan Sri Sathya Sai Baba. The contributing author also dedicates this paper to his loving elder brother D. A. C. Prakash who still lives in his heart.

\printbibliography

\end{document}